% Associative and preassociative functions
%
% by Jean-Luc Marichal and Bruno Teheux

\documentclass[10pt]{amsart}
\usepackage{amssymb,MnSymbol}
\usepackage{amsthm,amsmath}
\usepackage{cite}

\title{Associative and preassociative functions\\ (improved version)$^*$}\thanks{$^*$This paper is an unpublished improved/corrected version of the article ``Associative and preassociative functions'' published in \emph{Semigroup Forum} in 2014.}

\author{Jean-Luc Marichal}
\address{Mathematics Research Unit, FSTC, University of Luxembourg \\
6, rue Coudenhove-Kalergi, L-1359 Luxembourg, Luxembourg} \email{jean-luc.marichal[at]uni.lu }

\author{Bruno Teheux}
\address{Mathematics Research Unit, FSTC, University of Luxembourg \\
6, rue Coudenhove-Kalergi, L-1359 Luxembourg, Luxembourg} \email{bruno.teheux[at]uni.lu }

\date{September 11, 2014}

\begin{document}

\theoremstyle{plain}
\newtheorem{theorem}{Theorem}[section]% Supprimer [section] pour une numérotation linéaire
\newtheorem{lemma}[theorem]{Lemma}
\newtheorem{proposition}[theorem]{Proposition}
\newtheorem{corollary}[theorem]{Corollary}
\newtheorem{fact}[theorem]{Fact}
\newtheorem*{main}{Main Theorem}

\theoremstyle{definition}
\newtheorem{definition}[theorem]{Definition}
\newtheorem{example}[theorem]{Example}

\theoremstyle{remark}
\newtheorem*{conjecture}{Conjecture}
\newtheorem{remark}{Remark}
\newtheorem{claim}{Claim}

\newcommand{\N}{\mathbb{N}}
\newcommand{\Q}{\mathbb{Q}}
\newcommand{\R}{\mathbb{R}}

\newcommand{\ran}{\mathrm{ran}}
\newcommand{\dom}{\mathrm{dom}}
\newcommand{\id}{\mathrm{id}}
\newcommand{\med}{\mathrm{med}}

\newcommand{\bfu}{\mathbf{u}}
\newcommand{\bfv}{\mathbf{v}}
\newcommand{\bfw}{\mathbf{w}}
\newcommand{\bfx}{\mathbf{x}}
\newcommand{\bfy}{\mathbf{y}}
\newcommand{\bfz}{\mathbf{z}}

\begin{abstract}
We investigate the associativity property for functions of indefinite arities and introduce and discuss the more general property of preassociativity, a generalization of associativity which does not involve any composition of functions.
\end{abstract}

\keywords{Associativity, Preassociativity, Functional equation}

\subjclass[2010]{20M99, 39B72}

\maketitle

%---------------------------------------------------------------------------------------------- Section
\section{Introduction}

Let $X,Y$ be arbitrary nonempty sets. Throughout this paper, we regard vectors $\bfx$ in $X^n$ as $n$-strings over $X$.
The $0$-string or \emph{empty} string is denoted by $\varepsilon$ so that $X^0=\{\varepsilon\}$. We denote by $X^*$ the set of all strings over $X$, that is, $X^*=\bigcup_{n\geqslant 0}X^n$.
Moreover, we consider $X^*$ endowed with concatenation for which we adopt the juxtaposition notation. For instance, if $\bfx\in X^n$, $y\in X$, and $\bfz\in X^m$, then $\bfx y\bfz\in X^{n+1+m}$. Furthermore, for $\bfx\in X^m$,  we use the short-hand notation $\bfx^n=\bfx\cdots\bfx\in X^{n\times m}$. The \emph{length} $|\bfx|$ of a string $\bfx\in X^*$ is a nonnegative integer defined in the usual way: we have $|\bfx|=n$ if and only if $\bfx\in X^n$.

In the sequel, we will be interested both in functions of a given fixed arity (i.e., \emph{$n$-ary} functions $F\colon X^n\to Y$) as well as in functions of indefinite arities (i.e., \emph{variadic} functions $F\colon X^*\to Y$). The \emph{$n$-ary part} $F_n$ of a variadic function $F\colon X^*\to Y$ is the restriction of $F$ to $X^n$, i.e., $F_n=F|_{X^n}$. We denote by $F^{\flat}$ the restriction $F|_{X^*\setminus\{\varepsilon\}}$ of $F$ to the strings of positive lengths. We say that a variadic function $F\colon X^*\to Y$ is \emph{standard} if the equality $F(\bfx)=F(\varepsilon)$ holds only if $\bfx =\varepsilon$. Finally, a variadic function $F\colon X^*\to X\cup\{\varepsilon\}$ is called a \emph{variadic operation on $X$} (or an \emph{operation} for short), and we say that such an operation is \emph{$\varepsilon$-preserving standard} (or \emph{$\varepsilon$-standard} for short) if it is standard and satisfies $F(\varepsilon)=\varepsilon$.

In this paper we are first interested in the following associativity property for variadic operations (see \cite[p.~24]{Mar98}).

\begin{definition}[{\cite{Mar98}}]\label{de:assocN}
A variadic operation $F\colon X^*\to X\cup\{\varepsilon\}$ is said to be \emph{associative} if for every $\bfx\bfy\bfz\in X^*$ we have $F(\bfx\bfy\bfz)=F(\bfx F(\bfy)\bfz)$.
\end{definition}

Any associative operation $F \colon X^* \to X\cup\{\varepsilon\}$ clearly satisfies the condition $F(\varepsilon)=F(F(\varepsilon))$. From this observation it follows immediately that any associative standard operation $F \colon X^* \to X\cup\{\varepsilon\}$ is necessarily $\varepsilon$-standard.

Alternative definitions of associativity for $\varepsilon$-standard operations have been proposed by different authors; see \cite[p.~16]{CalKolKomMes02}, \cite{CouMar11}, \cite[p.~32]{GraMarMesPap09}, \cite[p.~216]{KleMesPap20}, and \cite[p.~24]{Mar98}. It was proved in \cite{CouMar11} that these definitions are equivalent to Definition~\ref{de:assocN}.

Thus defined, associativity expresses that the function value of any string does not change when replacing any of its substrings with its corresponding value. As an example, the $\varepsilon$-standard operation $F\colon\R^*\to\R\cup\{\varepsilon\}$ defined by $F_n(\bfx)=\sum_{i=1}^nx_i$ for every integer $n\geqslant 1$ is associative.

Associative $\varepsilon$-standard operations $F\colon X^*\to X\cup\{\varepsilon\}$ are closely related to associative binary operations $G\colon X^2\to X$, which are defined as the solutions of the functional equation
$$
G(G(xy)z) ~=~ G(xG(yz)),\qquad x,y,z\in X.
$$
In fact, we show (Corollary~\ref{cor:sa57df6s}) that a binary operation $G\colon X^2\to X$ is associative if and only if there exists an associative $\varepsilon$-standard operation $F\colon X^*\to X\cup\{\varepsilon\}$ such that $G=F_2$.

Based on a recent investigation of associativity (see \cite{CouMar11,CouMar12}), we show that an associative $\varepsilon$-standard operation $F\colon X^*\to X\cup\{\varepsilon\}$ is completely determined by $F_1$ and $F_2$. We also provide necessary and sufficient conditions on $F_1$ and $F_2$ for an $\varepsilon$-standard operation $F\colon X^*\to X\cup\{\varepsilon\}$ to be associative (Theorem~\ref{thm:sdf87fs}). These results are gathered in Section 3.

The main aim of this paper is to introduce and investigate the following generalization of associativity, called \emph{preassociativity}.

\begin{definition}\label{de:7ads5sa}
We say that a function $F\colon X^*\to Y$ is \emph{preassociative} if for every $\bfx\bfy\bfy'\bfz\in X^*$ we have
$$
F(\bfy) ~=~ F(\bfy')\quad\Rightarrow\quad F(\bfx\bfy\bfz) ~=~ F(\bfx\bfy'\bfz).
$$
\end{definition}

Thus, a function $F\colon X^*\to Y$ is preassociative if the function value of any string does not change when modifying any of its substring without changing its value. For instance, any $\varepsilon$-standard operation $F\colon\R^*\to\R\cup\{\varepsilon\}$ defined by $F_n(\bfx)=f(\sum_{i=1}^nx_i)$ for every integer $n\geqslant 1$, where $f\colon\R\to\R$ is a one-to-one function, is preassociative.

It is immediate to see that any associative $\varepsilon$-standard operation $F\colon X^*\to X\cup\{\varepsilon\}$ necessarily satisfies the equation $F_1\circ F^{\flat}=F^{\flat}$ (take $\bfx\bfz=\varepsilon$ in Definition~\ref{de:assocN}). Actually, we show (Proposition~\ref{prop:A-PA1}) that an $\varepsilon$-standard operation $F\colon X^*\to X\cup\{\varepsilon\}$ is associative if and only if it is preassociative and satisfies $F_1\circ F^{\flat}=F^{\flat}$.

It is noteworthy that, contrary to associativity, preassociativity does not involve any composition of functions and hence allows us to consider a codomain $Y$ that may differ from $X\cup\{\varepsilon\}$. For instance, the length function $F\colon X^*\to\R$, defined by $F(\bfx)=|\bfx|$, is standard and preassociative.

In this paper we mainly focus on those preassociative functions $F\colon X^*\to Y$ for which $F_1$ and $F^{\flat}$ have the same range. (For $\varepsilon$-standard operations, the latter condition is an immediate consequence of the condition $F_1\circ F^{\flat}=F^{\flat}$ and hence these preassociative functions include the associative $\varepsilon$-standard operations). We show that these functions are completely determined by their nullary, unary, and binary parts (Proposition~\ref{prop:wer5}) and we provide necessary and sufficient conditions on $F_1$ and $F_2$ for a standard function $F\colon X^*\to Y$ to be preassociative and such that $F^{\flat}$ has the same range as $F_1$ (Theorem~\ref{thm:sdf87fs2}). We also give a characterization of these functions as compositions of the form $F^{\flat}=f\circ H^{\flat}$, where $H\colon X^*\to X\cup\{\varepsilon\}$ is an associative $\varepsilon$-standard operation and $f\colon H(X^*\setminus\{\varepsilon\})\to Y$ is one-to-one (Theorem~\ref{thm:FactoriAWRI-BPA237111}). This is done in Section 4.

The terminology used throughout this paper is the following. We denote by $\N$ the set $\{1,2,3,\ldots\}$ of strictly positive integers. The domain and range of any function $f$ are denoted by $\dom(f)$ and $\ran(f)$, respectively. The identity operation on $X$ is the function $\id\colon X\to X$ defined by $\id(x) = x$. Finally, for functions $f_i\colon X^{n_i}\to Y$ ($i=1,\ldots,m$), the function $(f_1,\ldots,f_m)$, from $X^{n_1+\cdots +n_m}$ to $Y^m$, is defined by $(f_1,\ldots,f_m)(\bfx_1\cdots\bfx_m)= f_1(\bfx_1)\cdots f_m(\bfx_m)$, where $|\bfx_i|=n_i$ for $i=1,\ldots,m$. For instance, given functions $f\colon Y^2\to Y$, $g\colon X\to Y$, and $h\colon X\to Y$, the function $f\circ(g,h)$, from $X^2$ to $Y$, is defined by $(f\circ(g,h))(x_1,x_2)=f(g(x_1),h(x_2))$.

\begin{remark}
Algebraically, preassociativity of a function $F\colon X^*\to Y$ means that $\ker(F)$ is a congruence on the free monoid $X^*$ generated by $X$ (see Proposition~\ref{prop:4.1-78f6dg}). Also, the condition that $F$ is standard means that $\varepsilon/\ker(F)=\{\varepsilon\}$. Similarly, an $\varepsilon$-standard operation $F\colon X^*\to X\cup\{\varepsilon\}$ is associative if and only if $F$ is preassociative and $F(\bfx)\in\bfx/\ker(F)$ for every $\bfx\in X^*$ (see Proposition~\ref{prop:A-PA1}). Thus, associativity and preassociativity have good algebraic translations in terms of congruences of free monoids. It turns out that this algebraic language does not help either in stating or in proving the results that we obtain in this paper. However, this translation deserves further investigation and could lead to characterizations of certain classes of associative or preassociative functions. We postpone this investigation to a future paper.
\end{remark}

%---------------------------------------------------------------------------------------------- Section
\section{Preliminaries}

Recall that a function $F\colon X^n\to X$ ($n\in\N$) is said to be \emph{idempotent} (see, e.g., \cite{GraMarMesPap09}) if $F(x^n)=x$ for every $x\in X$.

We now introduce the following definitions. We say that an $\varepsilon$-standard operation $F\colon X^*\to X\cup\{\varepsilon\}$ is
\begin{itemize}
\item \emph{idempotent} if $F_n$ is idempotent for every $n\in\N$,

\item \emph{unarily idempotent} if $F_1=\id$,

\item \emph{unarily range-idempotent} if $F_1|_{\ran(F^{\flat})}=\id|_{\ran(F^{\flat})}$, or equivalently, $F_1\circ F^{\flat}=F^{\flat}$. In this case $F_1$ necessarily satisfies the equation $F_1\circ F_1=F_1$.
\end{itemize}
We say that a function $F\colon X^*\to Y$ is \emph{unarily quasi-range-idempotent} if $\ran(F_1)=\ran(F^{\flat})$. We observe that this property is a consequence of the condition $F_1\circ F^{\flat}=F^{\flat}$ whenever $F$ is an $\varepsilon$-standard operation. The following proposition provides a finer result.

\begin{proposition}\label{prop:22f1f1f1}
An $\varepsilon$-standard operation $F\colon X^*\to X\cup\{\varepsilon\}$ is unarily range-idempotent if and only if it is unarily quasi-range-idempotent and satisfies $F_1\circ F_1=F_1$.
\end{proposition}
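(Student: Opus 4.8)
The plan is to establish both implications directly from the elementary set-theoretic relationship between $F_1$ and $F^{\flat}$, the key preliminary observation being that $F_1$ is nothing but the restriction of $F^{\flat}$ to $X^1=X$, whence $\ran(F_1)\subseteq\ran(F^{\flat})$ holds unconditionally. I would also first record that $\varepsilon$-standardness forces $F_1$ to be a map $X\to X$: if $F_1(x)=F(x)=\varepsilon$ for some $x\in X$, then standardness would give $x=\varepsilon$, a contradiction. This guarantees both that the composition $F_1\circ F_1$ is well defined and that $\ran(F^{\flat})\subseteq X$, and it is essentially the only place where the hypothesis on $F$ is used.

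For the necessity direction, I would assume unary range-idempotence, i.e., $F_1(y)=y$ for every $y\in\ran(F^{\flat})$. Quasi-range-idempotence then follows by combining the trivial inclusion above with the reverse one: every $y\in\ran(F^{\flat})$ equals $F_1(y)$ and hence lies in $\ran(F_1)$, so $\ran(F_1)=\ran(F^{\flat})$. To obtain $F_1\circ F_1=F_1$, I would note that for any $x\in X$ the value $F_1(x)$ lies in $\ran(F_1)\subseteq\ran(F^{\flat})$, so applying range-idempotence to the point $F_1(x)$ yields $F_1(F_1(x))=F_1(x)$.

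For the sufficiency direction, I would assume $\ran(F_1)=\ran(F^{\flat})$ together with $F_1\circ F_1=F_1$, and take an arbitrary $y\in\ran(F^{\flat})$. Using the equality of ranges I may write $y=F_1(x)$ for some $x\in X$, and then $F_1(y)=F_1(F_1(x))=F_1(x)=y$ by the idempotence of $F_1$. Since $y$ was an arbitrary element of $\ran(F^{\flat})$, this is precisely unary range-idempotence.

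I do not anticipate a genuine obstacle here: once the inclusion $\ran(F_1)\subseteq\ran(F^{\flat})$ and the fact that $F_1$ maps $X$ into $X$ are in place, each implication reduces to a one-line chain of equalities. The only point requiring minor care is to keep track of where $\varepsilon$-standardness intervenes, namely to ensure that $F_1\circ F_1$ is meaningful and that all the ranges under consideration sit inside $X$.
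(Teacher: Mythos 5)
Your proof is correct and follows essentially the same route as the paper's: both directions rest on the trivial inclusion $\ran(F_1)\subseteq\ran(F^{\flat})$ and on the observation that, once $\ran(F_1)=\ran(F^{\flat})$, the identity $F_1\circ F_1=F_1$ is the pointwise statement that $F_1$ fixes $\ran(F^{\flat})$, i.e., $F_1\circ F^{\flat}=F^{\flat}$. Your argument is merely the pointwise rendering of the paper's compositional one, with the added (correct, and welcome) explicit check that $\varepsilon$-standardness makes $F_1$ a map $X\to X$ so that $F_1\circ F_1$ is well defined.
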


\begin{proof}
(Necessity) We have $\ran(F_1)\subseteq\ran(F^{\flat})$ for any operation $F\colon X^*\to X\cup\{\varepsilon\}$. Since $F$ is unarily range-idempotent, we have $F_1\circ F^{\flat}=F^{\flat}$, from which the converse inclusion follows immediately. In particular, $F_1\circ F_1=F_1$.

(Sufficiency) Since $F$ is unarily quasi-range-idempotent, the identity $F_1\circ F_1=F_1$ is equivalent to $F_1\circ F^{\flat}=F^{\flat}$.
\end{proof}

We now show that any unarily quasi-range-idempotent function $F\colon X^*\to Y$ can always be factorized as $F^{\flat}=F_1\circ H^{\flat}$, where $H\colon X^*\to X\cup\{\varepsilon\}$ is a unarily range-idempotent $\varepsilon$-standard operation.

First recall that a function $g$ is a \emph{quasi-inverse} \cite[Sect.~2.1]{SchSkl83} of a function $f$ if
$$
f\circ g|_{\ran(f)}=\id|_{\ran(f)}\qquad\mbox{and}\qquad\ran(g|_{\ran(f)})=\ran(g).
$$

For any function $f$, denote by $Q(f)$ the set of its quasi-inverses. This set is nonempty whenever we assume the Axiom of Choice (AC), which is actually just another
form of the statement ``every function has a quasi-inverse.'' Recall also that the relation of being quasi-inverse is symmetric, i.e., if $g\in
Q(f)$, then $f\in Q(g)$; moreover, we have $\ran(f)\subseteq\dom(g)$, $\ran(g)\subseteq\dom(f)$, and the functions $f|_{\ran(g)}$ and $g|_{\ran(f)}$ are one-to-one.

\begin{proposition}\label{prop:QRIqi2431}
Assume AC and let $F\colon X^*\to Y$ be a unarily quasi-range-idempo{\-}tent function. For any $g\in Q(F_1)$, the $\varepsilon$-standard operation $H\colon X^*\to X\cup\{\varepsilon\}$ defined as $H^{\flat}=g\circ F^{\flat}$ is a unarily range-idempotent solution of the equation $F^{\flat}=F_1\circ H^{\flat}$. Moreover, the function $F_1|_{\ran(H^{\flat})}$ is one-to-one.
\end{proposition}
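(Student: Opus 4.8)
The plan is to verify, in turn, the four assertions bundled into the statement: that $H$ is a well-defined $\varepsilon$-standard operation, that it satisfies $F^{\flat}=F_1\circ H^{\flat}$, that it is unarily range-idempotent, and that $F_1|_{\ran(H^{\flat})}$ is one-to-one. Throughout, I would lean on the two defining identities of a quasi-inverse $g\in Q(F_1)$, namely $F_1\circ g|_{\ran(F_1)}=\id|_{\ran(F_1)}$ and $\ran(g|_{\ran(F_1)})=\ran(g)$, together with the standing hypothesis of unary quasi-range-idempotence, $\ran(F^{\flat})=\ran(F_1)$. The observation that ties these together, and on which every step rests, is that $F^{\flat}(\bfx)\in\ran(F^{\flat})=\ran(F_1)$ for every nonempty string $\bfx$, so that the cancellation $F_1\circ g=\id$ may be applied to any value of $F^{\flat}$.

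First I would settle well-definedness and the $\varepsilon$-standard property. Since $\ran(F^{\flat})=\ran(F_1)\subseteq\dom(g)$, the composition $g\circ F^{\flat}$ is defined; since moreover $\ran(g)\subseteq\dom(F_1)=X$, it takes values in $X$. Together with the stipulation $H(\varepsilon)=\varepsilon$, this shows that $H$ maps $X^*$ into $X\cup\{\varepsilon\}$ and that $H(\bfx)\neq\varepsilon$ whenever $\bfx\neq\varepsilon$, so $H$ is standard, hence $\varepsilon$-standard.

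Next, for the functional equation I would compute $F_1\circ H^{\flat}=F_1\circ g\circ F^{\flat}$ and apply $F_1\circ g|_{\ran(F_1)}=\id|_{\ran(F_1)}$ to the argument $F^{\flat}(\bfx)\in\ran(F_1)$, which collapses the composition to $F^{\flat}$. For unary range-idempotence, I would observe that restricting $H^{\flat}=g\circ F^{\flat}$ to $X^1$ yields $H_1=g\circ F_1$, so that $H_1\circ H^{\flat}=g\circ(F_1\circ g)\circ F^{\flat}$; the same quasi-inverse identity, applied again to $F^{\flat}(\bfx)$, reduces the inner $F_1\circ g$ to the identity and leaves $g\circ F^{\flat}=H^{\flat}$, which is precisely $H_1\circ H^{\flat}=H^{\flat}$.

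Finally, for the injectivity claim I would first compute the range of $H^{\flat}$: using $\ran(F^{\flat})=\ran(F_1)$ and the second quasi-inverse identity $\ran(g|_{\ran(F_1)})=\ran(g)$, one gets $\ran(H^{\flat})=g(\ran(F^{\flat}))=g(\ran(F_1))=\ran(g)$. The conclusion then follows from the symmetry of the quasi-inverse relation recalled just before the statement: since $g\in Q(F_1)$ gives $F_1\in Q(g)$, the restriction $F_1|_{\ran(g)}$ is one-to-one, and $\ran(g)=\ran(H^{\flat})$. I do not expect a genuine obstacle here, as the argument is careful bookkeeping with domains and ranges; the one substantive point worth flagging is that unary quasi-range-idempotence is exactly what licenses applying the cancellation $F_1\circ g=\id$ not merely on $\ran(F_1)$ but to every value of $F^{\flat}$, and without the equality $\ran(F^{\flat})=\ran(F_1)$ none of the compositions above would simplify.
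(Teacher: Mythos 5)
Your proposal is correct and follows essentially the same route as the paper's proof: cancel $F_1\circ g$ on $\ran(F^{\flat})=\ran(F_1)$ to get $F^{\flat}=F_1\circ H^{\flat}$, compute $H_1\circ H^{\flat}=g\circ F_1\circ H^{\flat}=g\circ F^{\flat}=H^{\flat}$ for unary range-idempotence, and deduce injectivity from the one-to-one-ness of $F_1|_{\ran(g)}$. The only (harmless) difference is that you establish the equality $\ran(H^{\flat})=\ran(g)$ via the second quasi-inverse identity, whereas the paper only needs the inclusion $\ran(H^{\flat})\subseteq\ran(g)$.
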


\begin{proof}
Since $\ran(F_1)=\ran(F^{\flat})$, we have $F_1\circ g|_{\ran(F^{\flat})}=\id|_{\ran(F^{\flat})}$ and hence $F_1\circ H^{\flat}=F_1\circ g\circ F^{\flat}=F^{\flat}$. Also, $H$ is unarily range-idempotent since $H_1\circ H^{\flat}=g\circ F_1\circ H^{\flat}=g\circ F^{\flat}=H^{\flat}$. Since $F_1|_{\ran(g)}$ is one-to-one and $\ran(H^{\flat})\subseteq\ran(g)$, the function $F_1|_{\ran(H^{\flat})}$ is one-to-one, too.
\end{proof}

The following proposition yields necessary and sufficient conditions for a function $F\colon X^*\to Y$ to be unarily quasi-range-idempotent. We first consider a lemma.

\begin{lemma}\label{lemma:sdaf6f}
Let $f$ and $g$ be functions. If there exists a function $h$ such that $f=g\circ h$, then $\ran(f)\subseteq\ran(g)$. Under AC, the converse also holds.
\end{lemma}

\begin{proof}
The necessity is trivial. For the sufficiency, take $e\in Q(g)$. Since $\ran(f)\subseteq\ran(g)$ we must have $g\circ e|_{\ran(f)}=\id|_{\ran(f)}$, or equivalently, $g\circ e\circ f=f$.
\end{proof}

\begin{proposition}\label{prop:ACqriTFAE3451}
Assume AC and let $F\colon X^*\to Y$ be a function. The following assertions are equivalent.
\begin{enumerate}
\item[(i)] $F$ is unarily quasi-range-idempotent.

\item[(ii)] There exists an $\varepsilon$-standard operation $H\colon X^*\to X\cup\{\varepsilon\}$ such that $F^{\flat}=F_1\circ H^{\flat}$.

\item[(iii)] There exists a unarily idempotent $\varepsilon$-standard operation $H\colon X^*\to X\cup\{\varepsilon\}$ and a function $f\colon X\to Y$ such that $F^{\flat}=f\circ H^{\flat}$. In this case, $f=F_1$.

\item[(iv)] There exists a unarily range-idempotent $\varepsilon$-standard operation $H\colon X^*\to X\cup\{\varepsilon\}$ and a function $f\colon X\to Y$ such that $F^{\flat}=f\circ H^{\flat}$. In this case, $F^{\flat}=F_1\circ H^{\flat}$. Moreover, if $h=F_1|_{\ran(H^{\flat})}$ is one-to-one, then $h^{-1}\in Q(F_1)$.

\item[(v)] There exists a unarily quasi-range-idempotent $\varepsilon$-standard operation $H\colon X^*\to X\cup\{\varepsilon\}$ and a function $f\colon X\to Y$ such that $F^{\flat}=f\circ H^{\flat}$.
\end{enumerate}
In assertions (ii), (iv), and (v) we may choose $H^{\flat}=g\circ F^{\flat}$ for any $g\in Q(F_1)$ and $H$ is then unarily range-idempotent. In assertion (iii) we may choose $H_1=\id$ and $H_n=g\circ F_n$ for every $n>1$ and any $g\in Q(F_1)$.
\end{proposition}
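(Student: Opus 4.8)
The plan is to establish the chain of implications (i) $\Rightarrow$ (iii) $\Rightarrow$ (iv) $\Rightarrow$ (v) $\Rightarrow$ (ii) $\Rightarrow$ (i), which is economical since each intermediate assertion is a strengthening of the existential claim in its neighbour. The implications (iii) $\Rightarrow$ (iv) $\Rightarrow$ (v) are nearly immediate: a unarily idempotent operation (i.e.\ $H_1=\id$) is certainly unarily range-idempotent, and a unarily range-idempotent operation is unarily quasi-range-idempotent by Proposition~\ref{prop:22f1f1f1}. Likewise (v) $\Rightarrow$ (ii) follows by noting that if $F^{\flat}=f\circ H^{\flat}$ with $H$ unarily quasi-range-idempotent, then after possibly replacing $H$ one checks that $f$ restricted to $\ran(H^{\flat})=\ran(H_1)$ agrees with $F_1$ composed with a suitable map; more directly, the cleanest route is to prove (ii) $\Rightarrow$ (i) and then realize each stronger factorization using Proposition~\ref{prop:QRIqi2431}.

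Concretely, I would organize the proof around two anchors. First, for (ii) $\Rightarrow$ (i): if $F^{\flat}=F_1\circ H^{\flat}$ for some $\varepsilon$-standard operation $H$, then $\ran(F^{\flat})=\ran(F_1\circ H^{\flat})\subseteq\ran(F_1)$; the reverse inclusion $\ran(F_1)\subseteq\ran(F^{\flat})$ holds for any operation, so $\ran(F_1)=\ran(F^{\flat})$, i.e.\ $F$ is unarily quasi-range-idempotent. Second, for the construction direction (i) $\Rightarrow$ everything else, I would invoke Proposition~\ref{prop:QRIqi2431}: assuming AC and unary quasi-range-idempotence, pick any $g\in Q(F_1)$ and set $H^{\flat}=g\circ F^{\flat}$. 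That proposition already gives that $H$ is a unarily range-idempotent $\varepsilon$-standard operation satisfying $F^{\flat}=F_1\circ H^{\flat}$, which simultaneously yields (ii), (iv) (with $f=F_1$), and (v). This single construction also justifies the final sentence of the statement claiming we may take $H^{\flat}=g\circ F^{\flat}$.

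For (iii), which demands the stronger normalization $H_1=\id$, the recipe in the last sentence must be verified: set $H_1=\id$ and $H_n=g\circ F_n$ for $n>1$. One checks that this $H$ is $\varepsilon$-standard and unarily idempotent, and that $F^{\flat}=f\circ H^{\flat}$ with $f=F_1$: on the unary part $F_1=F_1\circ\id=f\circ H_1$, and on the $n$-ary part for $n>1$ we have $f\circ H_n=F_1\circ g\circ F_n=F_n$, using $F_1\circ g|_{\ran(F_n)}=\id$ which follows from $\ran(F_n)\subseteq\ran(F^{\flat})=\ran(F_1)$ together with the quasi-inverse property of $g$. The uniqueness remark $f=F_1$ in (iii) is read off by restricting $F^{\flat}=f\circ H^{\flat}$ to $X^1$ and using $H_1=\id$. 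Finally, the supplementary claim in (iv) that $h^{-1}\in Q(F_1)$ when $h=F_1|_{\ran(H^{\flat})}$ is one-to-one should be checked directly against the definition of quasi-inverse, exploiting that $\ran(F_1)=\ran(F^{\flat})=\ran(F_1\circ H^{\flat})$.

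The main obstacle I anticipate is bookkeeping rather than depth: one must be careful about \emph{where} the quasi-inverse identity $f\circ g=\id$ actually holds, since $g\in Q(F_1)$ only guarantees $F_1\circ g|_{\ran(F_1)}=\id|_{\ran(F_1)}$, and the factorizations are valid precisely because every relevant range sits inside $\ran(F_1)=\ran(F^{\flat})$. Getting the domains and the restriction points right—so that each composition is legitimately the identity on the set it is applied to—is the step most prone to error, but it is resolved cleanly by the hypothesis of unary quasi-range-idempotence, which is exactly the range equality that makes all these restrictions coincide.
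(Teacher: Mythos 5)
Your two anchors --- the range computation for $(ii)\Rightarrow(i)$ and the invocation of Proposition~\ref{prop:QRIqi2431} with $H^{\flat}=g\circ F^{\flat}$ for the construction direction, plus the explicit verification of the $(iii)$ recipe --- are correct and match the paper's own proof in spirit (the paper cites Proposition~\ref{prop:QRIqi2431} for $(i)\Rightarrow(ii)$ and for the final ``we may choose'' clause, and your check of the $H_1=\id$, $H_n=g\circ F_n$ construction is if anything more careful than the paper's terse ``modifying $H_1$ into $\id$''). But there is a genuine structural gap: as finally organized, you never establish any implication from $(iii)$, $(iv)$, or $(v)$ back to $(i)$. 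Your first sketch of $(v)\Rightarrow(ii)$ is not salvageable as stated: keeping the same $H$, one only gets $F_1\circ H^{\flat}=f\circ H_1\circ H^{\flat}$, and under mere unary \emph{quasi}-range-idempotence of $H$ there is no reason that $H_1\circ H^{\flat}=H^{\flat}$, so $F^{\flat}=F_1\circ H^{\flat}$ can fail; replacing $H$ requires AC and already presupposes $(i)$. Your fallback (``prove $(ii)\Rightarrow(i)$ and realize each stronger factorization from $(i)$'') then yields only $(i)\Leftrightarrow(ii)$ and $(i)\Rightarrow(iii),(iv),(v)$, leaving the cycle open. The missing step is exactly the paper's one-line $(v)\Rightarrow(i)$: from $F^{\flat}=f\circ H^{\flat}$ and $\ran(H_1)=\ran(H^{\flat})$ one gets
$$
\ran(F_1) ~=~ \ran(f\circ H_1) ~=~ f(\ran(H_1)) ~=~ f(\ran(H^{\flat})) ~=~ \ran(f\circ H^{\flat}) ~=~ \ran(F^{\flat}),
$$
which closes the loop since $(iii)\Rightarrow(iv)\Rightarrow(v)$ are, as you note, immediate weakenings. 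You had the key fact $\ran(H^{\flat})=\ran(H_1)$ in hand; you just never assembled it into this range identity.

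A smaller omission: the clause ``In this case, $F^{\flat}=F_1\circ H^{\flat}$'' in $(iv)$ is a claim about \emph{every} factorization through a unarily range-idempotent $H$, not just the one built from Proposition~\ref{prop:QRIqi2431}, and your text never verifies it (indeed your check of $h^{-1}\in Q(F_1)$ quietly presupposes it via $\ran(F^{\flat})=\ran(F_1\circ H^{\flat})$, which risks circularity in your cycle). The paper proves it in one line from $(iv)$'s own hypotheses: restricting to the unary part gives $F_1=f\circ H_1$, whence $F_1\circ H^{\flat}=f\circ H_1\circ H^{\flat}=f\circ H^{\flat}=F^{\flat}$ by range-idempotence of $H$; note this also yields $(iv)\Rightarrow(i)$ directly. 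With these two easy repairs your argument becomes complete and essentially coincides with the paper's proof; the remainder of your write-up, including the quasi-inverse bookkeeping you flag at the end, is sound.
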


\begin{proof}
$(i)\Rightarrow (ii)$ Follows from Lemma~\ref{lemma:sdaf6f} or Proposition~\ref{prop:QRIqi2431}.

$(ii)\Rightarrow (iii)$ Modifying $H_1$ into $\id$ and taking $f=F_1$, we obtain $F^{\flat}=f\circ H^{\flat}$, where $H$ is unarily idempotent. We then have $F_1=f\circ H_1=f\circ\id =f$.

$(iii)\Rightarrow (iv)$ The first part is trivial. Also, we have $F_1\circ H^{\flat}=f\circ H_1\circ H^{\flat}=f\circ H^{\flat}=F^{\flat}$. Now, if $h=F_1|_{\ran(H^{\flat})}$ is one-to-one, then we have $H^{\flat}=h^{-1}\circ F^{\flat}$ and hence $F_1\circ h^{-1}\circ F_1=F_1\circ H_1=h\circ H_1\circ H_1=h\circ H_1=F_1$, which shows that $h^{-1}\in Q(F_1)$.

$(iv)\Rightarrow (v)$ Trivial.

$(v)\Rightarrow (i)$ We have $\ran(F_1)=\ran(f\circ H_1)=\ran(f\circ H^{\flat})=\ran(F^{\flat})$.

The last part follows from Proposition~\ref{prop:QRIqi2431}.
\end{proof}

It is noteworthy that the proof of implication $(v)\Rightarrow (i)$ in Proposition~\ref{prop:ACqriTFAE3451} shows that the property of unary quasi-range-idempotence is preserved under left composition with unary maps.

%---------------------------------------------------------------------------------------------- Section
\section{Associative functions}

The following proposition shows that the definition of associativity (Definition~\ref{de:assocN}) remains unchanged if we upper bound the length of the string $\bfx\bfz$ by one. The proof makes use of the preassociativity property and will be postponed to Section~\ref{sec:PA}.

\begin{proposition}\label{prop:AsimpEquivVersion}
An $\varepsilon$-standard operation $F\colon X^*\to X\cup\{\varepsilon\}$ is associative if and only if for every $\bfx\bfy\bfz\in X^*$ such that $|\bfx\bfz|\leqslant 1$ we have $F(\bfx\bfy\bfz)=F(\bfx F(\bfy)\bfz)$.
\end{proposition}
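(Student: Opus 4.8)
The plan is to dispose of the necessity at once—if $F$ is associative then $F(\bfx\bfy\bfz)=F(\bfx F(\bfy)\bfz)$ holds for \emph{all} strings, in particular for those with $|\bfx\bfz|\leqslant 1$—and to concentrate on the sufficiency. So I would assume that $F(\bfx\bfy\bfz)=F(\bfx F(\bfy)\bfz)$ holds whenever $|\bfx\bfz|\leqslant 1$ and unpack the three cases $|\bfx\bfz|\in\{0,1\}$. Using $F(\varepsilon)=\varepsilon$, this hypothesis is exactly the conjunction of (a) $F_1\circ F^{\flat}=F^{\flat}$ (from $\bfx=\bfz=\varepsilon$), (b) $F(x\bfw)=F(x\,F(\bfw))$ for all $x\in X$ and $\bfw\in X^*$ (from $\bfx=x$, $\bfz=\varepsilon$), and (c) $F(\bfw z)=F(F(\bfw)\,z)$ for all $z\in X$ and $\bfw\in X^*$ (from $\bfx=\varepsilon$, $\bfz=z$). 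The goal is to show that (a)--(c) force $F$ to be preassociative; combined with (a), Proposition~\ref{prop:A-PA1} then yields associativity.

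The heart of the argument is to bootstrap the single-letter identities (b) and (c) to arbitrary strings. First I would prove, by induction on $|\bfx|$, that $F(\bfx\bfw)=F(\bfx\,F(\bfw))$ for all $\bfx,\bfw\in X^*$: the base case $\bfx=\varepsilon$ is (a), and in the inductive step I write $\bfx=x\bfx_0$, apply (b) to fold $\bfx_0\bfw$ into $F(\bfx_0\bfw)$, substitute the inductive identity $F(\bfx_0\bfw)=F(\bfx_0 F(\bfw))$ inside (legitimate since both sides are single elements of $X\cup\{\varepsilon\}$), and then apply (b) backwards. A symmetric induction on $|\bfz|$, using (c) in place of (b), gives $F(\bfw\bfz)=F(F(\bfw)\,\bfz)$ for all $\bfw,\bfz\in X^*$.

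With these two generalized identities in hand, preassociativity is immediate. Suppose $F(\bfy)=F(\bfy')$. The right-hand identity gives $F(\bfy\bfz)=F(F(\bfy)\bfz)=F(F(\bfy')\bfz)=F(\bfy'\bfz)$, and then the left-hand identity gives $F(\bfx\bfy\bfz)=F(\bfx\,F(\bfy\bfz))=F(\bfx\,F(\bfy'\bfz))=F(\bfx\bfy'\bfz)$. Hence $F$ is preassociative and satisfies $F_1\circ F^{\flat}=F^{\flat}$, so it is associative by Proposition~\ref{prop:A-PA1}.

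I expect the only delicate point to be the two inductions, specifically the bookkeeping in the inductive step where (b) (resp.\ (c)) is applied once forwards and once backwards around the substitution of a single-element value. One must keep track of the fact that the quantities being substituted, such as $F(\bfx_0\bfw)$, lie in $X\cup\{\varepsilon\}$, so that the single-letter hypotheses genuinely apply; the degenerate cases in which some string is $\varepsilon$ should be checked separately but are trivial since $F(\varepsilon)=\varepsilon$.
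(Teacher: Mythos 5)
Your proof is correct, and its skeleton matches the paper's: both reduce the sufficiency to Proposition~\ref{prop:A-PA1} after extracting unary range-idempotence ($F_1\circ F^{\flat}=F^{\flat}$) from the case $\bfx=\bfz=\varepsilon$. Where you diverge is in how full preassociativity is obtained. The paper notes that the restricted hypothesis with $|\bfx\bfz|=1$ immediately yields the restricted form of preassociativity and then invokes Proposition~\ref{prop:AsimpEquivVersionP} (stated there without proof, as immediate) to upgrade it to arbitrary $\bfx\bfz$; you bypass that proposition entirely, instead bootstrapping the one-letter identities (b) and (c) by induction to the generalized folding identities $F(\bfx\bfw)=F(\bfx\, F(\bfw))$ and $F(\bfw\bfz)=F(F(\bfw)\,\bfz)$ for arbitrary strings, and deriving full preassociativity from those. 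Your inductions are sound (the substituted quantities lie in $X\cup\{\varepsilon\}$, and the degenerate cases are handled by $F(\varepsilon)=\varepsilon$), so in effect you have inlined a proof of the equivalence that the paper delegates to Proposition~\ref{prop:AsimpEquivVersionP}; this costs length but buys self-containedness modulo Proposition~\ref{prop:A-PA1}. One further remark: the machinery you built makes the detour through preassociativity unnecessary, since the two generalized identities already give associativity directly: $F(\bfx\bfy\bfz)=F(\bfx\,F(\bfy\bfz))=F(\bfx\,F(F(\bfy)\bfz))=F(\bfx\,F(\bfy)\bfz)$, where the middle equality uses $F(\bfy\bfz)=F(F(\bfy)\bfz)$ and the outer two use the left-hand identity. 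So your argument could be shortened to avoid Proposition~\ref{prop:A-PA1} altogether, whereas the paper's proof is deliberately structured around that proposition (which is why it is deferred to Section~\ref{sec:PA}).
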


As observed in \cite[p.~25]{Mar98} (see also \cite[p.~15]{BelPraCal07} and \cite[p.~33]{GraMarMesPap09}), associative $\varepsilon$-standard operations $F\colon X^*\to X\cup\{\varepsilon\}$ are completely determined by their unary and binary parts. Indeed, by associativity we have
\begin{equation}\label{eq:saf76sf5xx}
F_n(x_1\cdots x_n) ~=~ F_2(F_{n-1}(x_1\cdots x_{n-1})x_n),\qquad n\geqslant 3,
\end{equation}
or equivalently,
\begin{equation}\label{eq:saf76sf5}
F_n(x_1\cdots x_n) ~=~ F_2(F_2(\cdots F_2(F_2(x_1x_2)x_3)\cdots) x_n),\qquad n\geqslant 3.
\end{equation}
We state this immediate result as follows.

\begin{proposition}\label{prop:AssocUniquen45}
Let $F\colon X^*\to X\cup\{\varepsilon\}$ and $G\colon X^*\to X\cup\{\varepsilon\}$ be two associative $\varepsilon$-standard operations such that $F_1=G_1$ and $F_2=G_2$. Then $F=G$.
\end{proposition}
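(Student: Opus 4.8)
The plan is to prove, by induction on $n$, that $F_n=G_n$ for every integer $n\geqslant 0$, which immediately yields $F=G$. The entire argument rests on the recursion \eqref{eq:saf76sf5xx}, which holds simultaneously for $F$ and for $G$ because both operations are associative and $\varepsilon$-standard; this recursion was already derived from associativity just before the statement, so it may be invoked freely.

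First I would dispose of the low-arity cases, which form the base of the induction. For $n=0$ we use that $F$ and $G$ are $\varepsilon$-standard, so that $F(\varepsilon)=\varepsilon=G(\varepsilon)$; the cases $n=1$ and $n=2$ are exactly the hypotheses $F_1=G_1$ and $F_2=G_2$. For the inductive step, I fix $n\geqslant 3$ and assume $F_{n-1}=G_{n-1}$. For an arbitrary string $x_1\cdots x_n\in X^n$, applying \eqref{eq:saf76sf5xx} to $F$, then the induction hypothesis $F_{n-1}=G_{n-1}$ together with $F_2=G_2$, and finally \eqref{eq:saf76sf5xx} to $G$, gives
$$
F_n(x_1\cdots x_n) ~=~ F_2(F_{n-1}(x_1\cdots x_{n-1})\,x_n) ~=~ G_2(G_{n-1}(x_1\cdots x_{n-1})\,x_n) ~=~ G_n(x_1\cdots x_n).
$$
Hence $F_n=G_n$, which completes the induction and therefore the proof.

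I do not expect any genuine obstacle here: the substance of the proposition is entirely contained in the recursion \eqref{eq:saf76sf5xx}, and what remains is a routine induction. The only points requiring a little care are to treat the nullary part $F_0$ separately (it is governed by the $\varepsilon$-standard hypothesis rather than by the recursion, since \eqref{eq:saf76sf5xx} is valid only for $n\geqslant 3$) and to anchor the induction at $n=2$ so that the recursive step is always applied in its valid range.
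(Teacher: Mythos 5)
Your proof is correct and follows essentially the same route as the paper, which presents the proposition as an immediate consequence of the recursion~\eqref{eq:saf76sf5xx}; you have simply made explicit the routine induction (including the base cases $n=0,1,2$, with the nullary case handled by the $\varepsilon$-standard hypothesis) that the paper leaves implicit.
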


A natural and important question now arises: Find necessary and sufficient conditions on $F_1$ and $F_2$ for an $\varepsilon$-standard operation $F\colon X^*\to X\cup\{\varepsilon\}$ to be associative. The following proposition is an important step towards an answer to this question.

\begin{proposition}\label{thm:Acz12new}
An $\varepsilon$-standard operation $F\colon X^*\to X\cup\{\varepsilon\}$ is associative if and only if
\begin{enumerate}
\item[(i)] $F_1\circ F_1=F_1$ and $F_1\circ F_2=F_2$,

\item[(ii)] $F_2=F_2\circ (F_1,\id)=F_2\circ (\id,F_1)$,

\item[(iii)] $F_2$ is associative, and

\item[(iv)] condition (\ref{eq:saf76sf5xx}) or (\ref{eq:saf76sf5}) holds.
\end{enumerate}
\end{proposition}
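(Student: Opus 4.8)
The plan is to prove necessity directly from Definition~\ref{de:assocN} and to prove sufficiency by invoking Proposition~\ref{prop:AsimpEquivVersion}, which reduces associativity to the verification of the defining identity for contexts with $|\bfx\bfz|\leqslant 1$. For necessity, suppose $F$ is associative. Taking $\bfx\bfz=\varepsilon$ in Definition~\ref{de:assocN} gives $F_1\circ F^{\flat}=F^{\flat}$, and restricting this to $1$- and $2$-strings yields (i). Specializing the associativity identity with $\bfx=\varepsilon$ and $|\bfy|=|\bfz|=1$ (and symmetrically) gives $F_2(xy)=F_2(F_1(x)\,y)=F_2(x\,F_1(y))$, which is (ii); and the chain $F_2(F_2(xy)z)=F(F(xy)z)=F(xyz)=F(x\,F(yz))=F_2(x\,F_2(yz))$ shows that $F_2$ is associative, which is (iii). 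Finally, (iv) is the instance of associativity with $\bfy=x_1\cdots x_{n-1}$ and $\bfz=x_n$, already recorded as~(\ref{eq:saf76sf5xx}). Throughout, standardness guarantees that $F$ takes values in $X$ on nonempty strings, so all the compositions above are well defined.

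For sufficiency, assume (i)--(iv). I would first establish the unary range-idempotence $F_1\circ F^{\flat}=F^{\flat}$: it holds on $1$- and $2$-strings by (i), and it propagates to every length by applying $F_1$ to~(\ref{eq:saf76sf5xx}) and using $F_1\circ F_2=F_2$. By Proposition~\ref{prop:AsimpEquivVersion} it then suffices to check $F(\bfx\bfy\bfz)=F(\bfx F(\bfy)\bfz)$ when $|\bfx\bfz|\leqslant 1$. The case $\bfx\bfz=\varepsilon$ is exactly $F_1\circ F^{\flat}=F^{\flat}$. For $\bfx=\varepsilon$ and $\bfz=z$, the identity $F(\bfy z)=F(F(\bfy)z)$ is trivial when $\bfy=\varepsilon$, reduces to (ii) when $|\bfy|=1$, and, when $|\bfy|=m\geqslant 2$, follows from~(\ref{eq:saf76sf5xx}) because both sides equal $F_2(F_m(\bfy)\,z)$. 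For $\bfx=x$ and $\bfz=\varepsilon$, the identity $F(x\bfy)=F(x\,F(\bfy))$ is likewise trivial when $\bfy=\varepsilon$ and reduces to (ii) when $|\bfy|=1$; the only delicate case is $|\bfy|\geqslant 2$, where the recursion~(\ref{eq:saf76sf5xx}) strips the rightmost letter rather than the leftmost one.

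The heart of the argument is therefore the claim that, for every $n\geqslant 2$,
\[
F_{n+1}(x\,y_1\cdots y_n) ~=~ F_2\bigl(x\,F_n(y_1\cdots y_n)\bigr),
\]
which I would prove by induction on $n$. For $n=2$ this is $F_3(xy_1y_2)=F_2(F_2(xy_1)\,y_2)=F_2(x\,F_2(y_1y_2))$, using~(\ref{eq:saf76sf5xx}) and then the associativity of $F_2$ from (iii). For the inductive step, I would rewrite $F_{n+1}(y_1\cdots y_{n+1})=F_2(F_n(y_1\cdots y_n)\,y_{n+1})$ by~(\ref{eq:saf76sf5xx}), move $x$ across the inner $F_2$ using (iii), apply the induction hypothesis to replace $F_2(x\,F_n(y_1\cdots y_n))$ by $F_{n+1}(xy_1\cdots y_n)$, and finally reapply~(\ref{eq:saf76sf5xx}) to recognize $F_2(F_{n+1}(xy_1\cdots y_n)\,y_{n+1})$ as $F_{n+2}(xy_1\cdots y_{n+1})$. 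The claim then gives $F(x\bfy)=F_2(x\,F(\bfy))=F(x\,F(\bfy))$ for $|\bfy|\geqslant 2$, settling the last case. I expect this one-sided induction to be the only genuine obstacle: since~(\ref{eq:saf76sf5xx}) is intrinsically right-handed, transferring it to the first argument is possible only through the associativity of the binary part $F_2$, and the main care lies in lining up (iii), (iv), and the induction hypothesis correctly.
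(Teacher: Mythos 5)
Your proof is correct and follows essentially the same route as the paper: both reduce associativity to the $|\bfx\bfz|\leqslant 1$ cases via Proposition~\ref{prop:AsimpEquivVersion}, derive the left-handed identity $F(x\bfy)=F(xF(\bfy))$ by an induction that uses the associativity of $F_2$ from (iii) to move $x$ across the right-handed recursion (\ref{eq:saf76sf5xx}), and obtain $F(\bfy)=F(F(\bfy))$ from (i) together with $F_1\circ F_2=F_2$. The only cosmetic difference is that the paper runs a single simultaneous induction establishing $F_n=F_2\circ(F_{n-1},\id)=F_2\circ(\id,F_{n-1})$ on strings of the form $u\bfx vw$, whereas you isolate the left-handed half in a separate induction, the right-handed half being immediate from (iv).
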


\begin{proof}
The necessity is trivial. To prove the sufficiency, let $F\colon X^*\to X\cup\{\varepsilon\}$ be an $\varepsilon$-standard operation satisfying conditions (i)--(iv) and let us show that $F$ is associative. Using conditions (ii)--(iv) we show by induction on $n$ that
$$
F_n ~=~ F_2\circ (F_{n-1},\id) ~=~ F_2\circ (\id,F_{n-1}) \qquad\mbox{for every $n\geqslant 2$.}
$$
The result clearly holds for $n=2$ by (ii). Assume that it holds for some $n\geqslant 2$ and let $u\bfx vw\in X^{n+1}$. We have
\begin{eqnarray*}
F_{n+1}(u\bfx vw) &=& F_2(F_n(u\bfx v)w) ~=~ F_2(F_2(F_{n-1}(u\bfx)v)w)\\
&=& F_2(F_2(uF_{n-1}(\bfx v))w) ~=~ F_2(uF_2(F_{n-1}(\bfx v)w))\\
&=& F_2(uF_n(\bfx vw)),
\end{eqnarray*}
which completes the proof by induction.

Thus, we have proved that $F(x\bfy)=F(xF(\bfy))$ and $F(\bfy z)=F(F(\bfy)z)$ for every $x\bfy z\in X^*$. To see that $F$ is associative, by Proposition~\ref{prop:AsimpEquivVersion} it remains to show that $F(\bfy)=F(F(\bfy))$ for every $\bfy\in X^*$. By (i) we can assume that $|\bfy|\geqslant 3$. Setting $\bfy=\bfu y$, by (i) we have
$$
F(\bfy) ~=~ F(\bfu y) ~=~ F_2(F(\bfu)y) ~=~ F_1(F_2(F(\bfu)y)) ~=~ F_1(F(\bfu y)) ~=~ F(F(\bfy)).
$$
This completes the proof.
\end{proof}

\begin{corollary}\label{cor:sa57df6s}
A binary operation $F\colon X^2\to X$ is associative if and only if there exists an associative $\varepsilon$-standard operation $G\colon X^*\to X\cup\{\varepsilon\}$ such that $F=G_2$.
\end{corollary}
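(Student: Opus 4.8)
The plan is to prove the two implications separately, treating the sufficiency as immediate and reducing the necessity to a direct application of Proposition~\ref{thm:Acz12new}. For the sufficiency, suppose $G\colon X^*\to X\cup\{\varepsilon\}$ is an associative $\varepsilon$-standard operation with $F=G_2$. By the necessity part of Proposition~\ref{thm:Acz12new}, namely condition (iii), $G_2$ is an associative binary operation; hence so is $F$. This direction requires no work beyond invoking that proposition.

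For the necessity, suppose $F\colon X^2\to X$ is associative. I would build a candidate extension $G$ by setting $G(\varepsilon)=\varepsilon$, $G_1=\id$, $G_2=F$, and defining $G_n$ for $n\geqslant 3$ recursively through equation (\ref{eq:saf76sf5xx}), that is, $G_n(x_1\cdots x_n)=F(G_{n-1}(x_1\cdots x_{n-1}),x_n)$. Since $G_1=\id$, $G_2=F$, and hence every $G_n$ take values in $X$ on nonempty strings, the operation $G$ is standard with $G(\varepsilon)=\varepsilon$, hence $\varepsilon$-standard, and by construction $G_2=F$.

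It then remains to check that $G$ is associative, for which I would verify conditions (i)--(iv) of Proposition~\ref{thm:Acz12new}. Condition (iv) holds by the very definition of $G$. Condition (iii) is exactly the assumed associativity of $F=G_2$. The choice $G_1=\id$ trivializes the two remaining conditions: (i) reads $\id\circ\id=\id$ together with $\id\circ F=F$, and (ii) reads $F=F\circ(\id,\id)$ in both of its forms. Thus all four conditions hold and Proposition~\ref{thm:Acz12new} yields that $G$ is associative, completing the construction.

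The step that deserves the most care is not any single verification but the recognition that the freedom to pick $G_1=\id$ is precisely what makes conditions (i) and (ii) automatic: for an arbitrary (merely idempotent) choice of $G_1$ these conditions could fail, so the identity unary part is the canonical extension that always works. Everything else reduces to a routine check against Proposition~\ref{thm:Acz12new}.
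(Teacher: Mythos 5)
Your proof is correct and follows exactly the paper's route: the paper's own proof reads, in full, ``The sufficiency follows from Proposition~\ref{thm:Acz12new}. For the necessity, just take $G_1=\id$.'' You have simply made explicit the details the paper leaves implicit --- the recursive definition of $G_n$ via Eq.~(\ref{eq:saf76sf5xx}) and the verification that the choice $G_1=\id$ trivializes conditions (i) and (ii) --- all of which check out.
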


\begin{proof}
The sufficiency follows from Proposition~\ref{thm:Acz12new}. For the necessity, just take $G_1=\id$.
\end{proof}

The following theorem, which follows from Proposition~\ref{thm:Acz12new}, provides an answer to the question raised above.

\begin{theorem}\label{thm:sdf87fs}
Let $F_1\colon X\to X$ and $F_2\colon X^2\to X$ be two operations. Then there exists an associative $\varepsilon$-standard operation $G\colon X^*\to X\cup\{\varepsilon\}$ such that $G_1=F_1$ and $G_2=F_2$ if and only if conditions (i)--(iii) of Proposition~\ref{thm:Acz12new} hold. Such an operation $G$ is then uniquely determined by $G_n=G_2\circ (G_{n-1},\id)$ for $n\geqslant 3$.
\end{theorem}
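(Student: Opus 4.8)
The plan is to derive Theorem~\ref{thm:sdf87fs} almost entirely from Proposition~\ref{thm:Acz12new} by showing that conditions (i)--(iii) are exactly what is needed to \emph{construct} an associative extension of the given unary and binary parts, and that condition (iv) of Proposition~\ref{thm:Acz12new} can be imposed as a \emph{definition} rather than an assumption. First I would establish necessity: if such an associative $G$ exists with $G_1=F_1$ and $G_2=F_2$, then Proposition~\ref{thm:Acz12new} (necessity, which is trivial there) immediately gives conditions (i)--(iii) for $F_1$ and $F_2$. So the substance is the sufficiency direction together with uniqueness.

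For sufficiency, I would \emph{define} a candidate operation $G\colon X^*\to X\cup\{\varepsilon\}$ by setting $G(\varepsilon)=\varepsilon$, $G_1=F_1$, $G_2=F_2$, and then recursively $G_n=G_2\circ(G_{n-1},\id)$ for $n\geqslant 3$, i.e.\ $G_n(x_1\cdots x_n)=F_2(G_{n-1}(x_1\cdots x_{n-1})\,x_n)$. By construction this $G$ is an $\varepsilon$-standard operation satisfying condition (iv) of Proposition~\ref{thm:Acz12new} (in the form~(\ref{eq:saf76sf5xx})), and its unary and binary parts are $F_1$ and $F_2$. The remaining task is to check that $G$ satisfies conditions (i)--(iii) of Proposition~\ref{thm:Acz12new}. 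But conditions (i), (ii), and (iii) are statements \emph{purely about $G_1$ and $G_2$}, namely $G_1\circ G_1=G_1$, $G_1\circ G_2=G_2$, $G_2=G_2\circ(G_1,\id)=G_2\circ(\id,G_1)$, and associativity of $G_2$; since $G_1=F_1$ and $G_2=F_2$, these are precisely the hypotheses (i)--(iii) assumed on $F_1,F_2$. Hence all four conditions of Proposition~\ref{thm:Acz12new} hold for $G$, and that proposition yields that $G$ is associative.

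For uniqueness, I would invoke Proposition~\ref{prop:AssocUniquen45}: any associative $\varepsilon$-standard operation is determined by its unary and binary parts, so two associative extensions sharing $F_1$ and $F_2$ must coincide. The explicit recursion $G_n=G_2\circ(G_{n-1},\id)$ then follows because any associative $G$ satisfies~(\ref{eq:saf76sf5xx}), as recorded before Proposition~\ref{prop:AssocUniquen45}.

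I do not expect a genuine obstacle here, since the real analytic content has been front-loaded into Proposition~\ref{thm:Acz12new}. The one point requiring care is making sure the recursively defined $G$ is a legitimate, well-defined $\varepsilon$-standard operation and that conditions (i)--(iii)---which are phrased in terms of $G_1,G_2$ rather than the abstract $F_1,F_2$---transfer verbatim once we identify $G_1=F_1$ and $G_2=F_2$; the slightly delicate subtlety is that condition (iv) is \emph{not} among the hypotheses on $(F_1,F_2)$ and must instead be \emph{built into} the construction of $G$, which is exactly why the theorem can drop it from its list while Proposition~\ref{thm:Acz12new} cannot.
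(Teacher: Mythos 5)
Your proposal is correct and is precisely the argument the paper intends: the theorem is stated as ``following from Proposition~\ref{thm:Acz12new},'' with condition (iv) imposed by defining $G_n=G_2\circ(G_{n-1},\id)$ recursively (well defined and $\varepsilon$-standard since $F_1,F_2$ map into $X$), sufficiency via Proposition~\ref{thm:Acz12new}, necessity via its trivial direction, and uniqueness via Proposition~\ref{prop:AssocUniquen45}. You have simply made explicit the details the paper leaves to the reader, including the key observation that (i)--(iii) concern only $G_1,G_2$ and so transfer verbatim under $G_1=F_1$, $G_2=F_2$.
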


Thus, two operations $F_1\colon X\to X$ and $F_2\colon X^2\to X$ are the unary and binary parts of an associative $\varepsilon$-standard operation $F\colon X^*\to X\cup\{\varepsilon\}$ if and only if these operations satisfy conditions (i)--(iii) of Proposition~\ref{thm:Acz12new}. In the case when only a binary operation $F_2$ is given, any unary operation $F_1$ satisfying conditions (i) and (ii) can be considered, for instance the identity operation. Note that it may happen that the identity operation is the sole possibility for $F_1$, for instance when we consider the binary operation $F_2\colon\R^2\to\R$ defined by $F_2(x_1x_2)=x_1+x_2$. However, there are examples where $F_1$ may differ from the identity operation. For instance, for any real number $p\geqslant 1$, the $\varepsilon$-standard \emph{$p$-norm} operation $F\colon\R^*\to\R\cup\{\varepsilon\}$ defined by $F_n(\bfx) = (\sum_{i=1}^n|x_i|^p)^{1/p}$ for every $n\in\N$, is associative but not unarily idempotent (here $|x|$ denotes the absolute value of $x$). Of course an associative $\varepsilon$-standard operation $F\colon X^*\to X\cup\{\varepsilon\}$ that is not unarily idempotent can be made unarily idempotent simply by setting $F_1=\id$. By Proposition~\ref{thm:Acz12new} the resulting operation is still associative.

%---------------------------------------------------------------------------------------------- Section
\section{Preassociative functions}
\label{sec:PA}

In this section we investigate the preassociativity property (see Definition~\ref{de:7ads5sa}) and characterize certain subclasses of preassociative functions (Theorem~\ref{thm:FactoriAWRI-BPA237111} and Proposition~\ref{prop:sdf57dzzs}).

Just as for associativity, preassociativity may have different equivalent forms. The following straightforward proposition gives an equivalent definition based on two equalities of values. The extension to any number of equalities is immediate.

\begin{proposition}\label{prop:4.1-78f6dg}
A function $F\colon X^*\to Y$ is preassociative if and only if for every $\bfx\bfx'\bfy\bfy'\in X^*$ we have
$$
F(\bfx) ~=~ F(\bfx')\quad\mbox{and}\quad F(\bfy) ~=~ F(\bfy')\quad\Rightarrow\quad F(\bfx\bfy) ~=~ F(\bfx'\bfy').
$$
\end{proposition}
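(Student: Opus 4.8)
The plan is to prove the equivalence by two implications, using the two-substitution form to recover the single-substitution definition and vice versa. The direction I expect to be immediate is that the two-equality condition implies preassociativity: given the hypotheses $F(\bfy)=F(\bfy')$, I would simply apply the stated implication with the trivially true side equality $F(\bfx\bfz)=F(\bfx\bfz)$ suitably arranged, so that preassociativity falls out as a special case. More precisely, Definition~\ref{de:7ads5sa} involves replacing a single substring, so I would obtain it by fixing one of the two pairs in Proposition~\ref{prop:4.1-78f6dg} to be identical.

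The more substantive direction is showing that preassociativity implies the two-equality condition. Here I would assume $F$ is preassociative and suppose $F(\bfx)=F(\bfx')$ and $F(\bfy)=F(\bfy')$. The idea is to change the two substrings one at a time, chaining two applications of the single-substitution definition through an intermediate string. First, applying preassociativity to the equality $F(\bfx)=F(\bfx')$ with the prefix empty and suffix $\bfy$ (viewing $\bfx,\bfx'$ as the substrings being swapped inside $\bfx\bfy$ versus $\bfx'\bfy$), I would get $F(\bfx\bfy)=F(\bfx'\bfy)$. Then, applying preassociativity to the equality $F(\bfy)=F(\bfy')$ with prefix $\bfx'$ and suffix empty, I would get $F(\bfx'\bfy)=F(\bfx'\bfy')$. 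Combining these two equalities by transitivity yields $F(\bfx\bfy)=F(\bfx'\bfy')$, which is exactly the desired conclusion.

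The only point requiring a little care is matching the string decomposition in Definition~\ref{de:7ads5sa} to each intended substitution: in the definition the swapped substrings are called $\bfy$ and $\bfy'$ and sit between a generic prefix $\bfx$ and suffix $\bfz$, so when I reuse it I must rename variables consistently to avoid collision with the $\bfx,\bfx',\bfy,\bfy'$ of the proposition. Apart from this bookkeeping, no obstacle arises, since the argument is purely a transitivity chain. The extension to any finite number of equalities, mentioned in the text before the statement, follows by an obvious induction: having swapped the first $k$ substrings, one more single-substitution step handles the $(k{+}1)$-st, so I would not need to spell it out in detail.
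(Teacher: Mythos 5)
Your argument is correct and coincides with the straightforward proof the paper has in mind: the paper in fact gives no proof at all, explicitly calling the proposition straightforward, and the substantive direction is precisely your two-step chain $F(\bfx\bfy)=F(\bfx'\bfy)=F(\bfx'\bfy')$ obtained by swapping one substring at a time. One small repair in the easy direction: the phrase ``apply the stated implication with the trivially true side equality $F(\bfx\bfz)=F(\bfx\bfz)$'' does not parse, because $\bfx$ and $\bfz$ are not adjacent in $\bfx\bfy\bfz$, so $\bfx\bfz$ cannot serve as one of the two concatenated blocks; the correct move is your subsequent remark applied \emph{twice}: from $F(\bfx)=F(\bfx)$ and $F(\bfy)=F(\bfy')$ deduce $F(\bfx\bfy)=F(\bfx\bfy')$, then from this and $F(\bfz)=F(\bfz)$ deduce $F(\bfx\bfy\bfz)=F(\bfx\bfy'\bfz)$. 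With that bookkeeping fixed, both directions and the inductive extension to finitely many equalities are exactly as you describe.
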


The following immediate result provides a simplified but equivalent definition of preassociativity (exactly as Proposition~\ref{prop:AsimpEquivVersion} did for associativity).

\begin{proposition}\label{prop:AsimpEquivVersionP}
A function $F\colon X^*\to Y$ is preassociative if and only if for every $\bfx\bfy\bfy'\bfz\in X^*$ such that $|\bfx\bfz|= 1$ we have
$$
F(\bfy) ~=~ F(\bfy')\quad\Rightarrow\quad F(\bfx\bfy\bfz) ~=~ F(\bfx\bfy'\bfz).
$$
\end{proposition}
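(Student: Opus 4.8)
The necessity is immediate, since the displayed implication required for arbitrary $\bfx,\bfz$ specializes in particular to those satisfying $|\bfx\bfz|=1$. For the sufficiency, the plan is to notice that the hypothesis $|\bfx\bfz|=1$ encodes exactly two one-letter special cases, namely prepending a single letter (the case $|\bfx|=1$, $\bfz=\varepsilon$) and appending a single letter (the case $\bfx=\varepsilon$, $|\bfz|=1$), and then to bootstrap these to arbitrary prefixes and suffixes by induction on their lengths.

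First I would prove, by induction on $|\bfx|$, that $F(\bfy)=F(\bfy')$ implies $F(\bfx\bfy)=F(\bfx\bfy')$ for every $\bfx\in X^*$. The base case $\bfx=\varepsilon$ is trivial. For the inductive step, write $\bfx=u\bfw$ with $u\in X$ and $|\bfw|<|\bfx|$; the induction hypothesis yields $F(\bfw\bfy)=F(\bfw\bfy')$, and applying the one-letter prepending case to the pair $(\bfw\bfy,\bfw\bfy')$ gives $F(u\bfw\bfy)=F(u\bfw\bfy')$, that is, $F(\bfx\bfy)=F(\bfx\bfy')$. By the symmetric argument, an induction on $|\bfz|$ using the one-letter appending case shows that $F(\bfy)=F(\bfy')$ implies $F(\bfy\bfz)=F(\bfy'\bfz)$ for every $\bfz\in X^*$.

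Combining the two is then immediate: given $F(\bfy)=F(\bfy')$, the first result gives $F(\bfx\bfy)=F(\bfx\bfy')$, and feeding this equality into the second result with the pair $(\bfx\bfy,\bfx\bfy')$ playing the role of $(\bfy,\bfy')$ yields $F(\bfx\bfy\bfz)=F(\bfx\bfy'\bfz)$, which is exactly preassociativity. I do not expect any genuine obstacle here, as the argument is a clean double induction whose whole content is that equality of $F$-values is preserved under one-sided single-letter extension and that this property stacks. The only point requiring a little care is the bookkeeping: at each step the induction hypothesis and the one-letter case must be invoked on the already-extended pair (such as $(\bfw\bfy,\bfw\bfy')$) rather than on the original pair $(\bfy,\bfy')$, so that the single-letter hypothesis is always applied to two strings genuinely known to have equal $F$-values.
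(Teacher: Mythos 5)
Your proof is correct, and it matches what the paper has in mind: the paper states this proposition as ``immediate'' without giving a proof, and your double induction on prefix and suffix lengths (bootstrapping the single-letter cases, with the one-letter hypothesis carefully applied to the already-extended pair) is exactly the routine argument being left implicit. No gaps.
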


As mentioned in the introduction, any associative operation $F\colon X^*\to X\cup\{\varepsilon\}$ is preassociative. Moreover, we have the following result.

\begin{proposition}\label{prop:A-PA1}
An $\varepsilon$-standard operation $F\colon X^*\to X\cup\{\varepsilon\}$ is associative if and only if it is preassociative and unarily range-idempotent (i.e., $F_1\circ F^{\flat}=F^{\flat}$).
\end{proposition}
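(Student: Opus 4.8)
The plan is to prove both directions of the equivalence.

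For the necessity direction, I would assume $F$ is associative and show it is preassociative and unarily range-idempotent. The unary range-idempotence $F_1\circ F^{\flat}=F^{\flat}$ was already observed in the introduction by taking $\bfx\bfz=\varepsilon$ in Definition~\ref{de:assocN}, so this follows immediately. For preassociativity, suppose $F(\bfy)=F(\bfy')$ for some $\bfy,\bfy'\in X^*$. Using associativity twice, for any $\bfx\bfz\in X^*$ I have $F(\bfx\bfy\bfz)=F(\bfx\,F(\bfy)\,\bfz)=F(\bfx\,F(\bfy')\,\bfz)=F(\bfx\bfy'\bfz)$, where the middle equality uses the hypothesis $F(\bfy)=F(\bfy')$. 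This establishes preassociativity.

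For the sufficiency direction, I would assume $F$ is preassociative and unarily range-idempotent, and prove associativity, i.e., $F(\bfx\bfy\bfz)=F(\bfx\,F(\bfy)\,\bfz)$ for all $\bfx\bfy\bfz\in X^*$. The key idea is to apply preassociativity with the substring $\bfy$ and the single-letter string $F(\bfy)$. To do this I must verify the hypothesis $F(\bfy)=F(F(\bfy))$, where on the right $F(\bfy)\in X$ is regarded as a $1$-string. But unary range-idempotence says precisely $F_1(F(\bfy))=F(\bfy)$ for every $\bfy\in X^*\setminus\{\varepsilon\}$, i.e., $F(F(\bfy))=F(\bfy)$; and the case $\bfy=\varepsilon$ is handled using $F(\varepsilon)=\varepsilon$ together with standardness. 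With $F(\bfy)=F(F(\bfy))$ in hand, preassociativity (applied to the pair $\bfy,F(\bfy)$ inside the context $\bfx\,\square\,\bfz$) yields exactly $F(\bfx\bfy\bfz)=F(\bfx\,F(\bfy)\,\bfz)$, which is associativity.

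The main subtlety I expect is the bookkeeping around the empty string and the distinction between $F(\bfy)$ as an element of $X\cup\{\varepsilon\}$ versus as a length-one (or length-zero) string to be substituted. When $\bfy\neq\varepsilon$, range-idempotence gives $F(\bfy)\in\ran(F^{\flat})$ and $F_1$ fixes it, so $F(\bfy)$ is a genuine element of $X$ and the substitution $\bfx\,F(\bfy)\,\bfz$ makes sense as a string of positive length; the equation $F(F(\bfy))=F(\bfy)$ then licenses the use of preassociativity. When $\bfy=\varepsilon$, we have $F(\bfy)=\varepsilon$ since $F$ is $\varepsilon$-standard, so $\bfx\,F(\bfy)\,\bfz=\bfx\bfz=\bfx\bfy\bfz$ and the associativity equation holds trivially. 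Thus the only real work is confirming that preassociativity can legitimately be invoked in the substitution step, which reduces entirely to the identity $F(\bfy)=F(F(\bfy))$ supplied by unary range-idempotence.
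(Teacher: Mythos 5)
Your proof is correct and follows essentially the same route as the paper's: necessity by applying $F(\bfx\bfy\bfz)=F(\bfx F(\bfy)\bfz)=F(\bfx F(\bfy')\bfz)=F(\bfx\bfy'\bfz)$, and sufficiency by deriving $F(\bfy)=F(F(\bfy))$ from unary range-idempotence and then substituting via preassociativity. Your extra bookkeeping on the case $\bfy=\varepsilon$ and on $F(\bfy)$ being a genuine element of $X$ (via $\varepsilon$-standardness) is sound and merely makes explicit what the paper's two-line proof leaves implicit.
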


\begin{proof}
(Necessity) $F$ is clearly unarily range-idempotent. To see that it is preassociative, let $\bfx\bfy\bfy'\bfz\in X^*$ such that $F(\bfy)=F(\bfy')$. Then we have $F(\bfx\bfy\bfz)=F(\bfx F(\bfy)\bfz)=F(\bfx F(\bfy')\bfz)=F(\bfx\bfy'\bfz)$.

(Sufficiency)  Let $\bfx\bfy\bfz\in X^*$. We then have $F(\bfy)=F(F(\bfy))$ and hence $F(\bfx\bfy\bfz)=F(\bfx F(\bfy)\bfz)$.
\end{proof}

\begin{remark}
\begin{enumerate}
\item[(a)] From Proposition~\ref{prop:A-PA1} it follows that a preassociative and unarily idempotent (i.e., $F_1=\id$) $\varepsilon$-standard operation $F\colon X^*\to X\cup\{\varepsilon\}$ is necessarily associative.

\item[(b)] The $\varepsilon$-standard operation $F\colon\R^*\to\R\cup\{\varepsilon\}$ defined by $F_n(\bfx)=2\sum_{i=1}^nx_i$ for every $n\in\N$ is an instance of a preassociative operation which is not associative.
\end{enumerate}
\end{remark}

We are now ready to provide a very simple proof of Proposition~\ref{prop:AsimpEquivVersion}.

\begin{proof}[Proof of Proposition~\ref{prop:AsimpEquivVersion}]
The necessity is trivial. To prove the sufficiency, let $F\colon X^*\to X\cup\{\varepsilon\}$ satisfy the stated conditions. Then $F$ is clearly unarily range-idempotent. To see that it is associative, by Proposition~\ref{prop:A-PA1} it suffices to show that it is preassociative. Let $\bfx\bfy\bfy'\bfz\in X^*$ such that $|\bfx\bfz|= 1$ and assume that $F(\bfy)=F(\bfy')$. Then we have $F(\bfx\bfy\bfz)=F(\bfx F(\bfy)\bfz)=F(\bfx F(\bfy')\bfz)=F(\bfx\bfy'\bfz)$. The conclusion then follows from Proposition~\ref{prop:AsimpEquivVersionP}.
\end{proof}

The following two straightforward propositions show how new preassociative functions can be generated from given preassociative standard functions by compositions with unary maps.

\begin{proposition}[Right composition]
If $F\colon X^*\to Y$ is standard and preassociative then, for every function $g\colon X'\to X$, the function $H\colon X'^*\to Y$, defined as $H_0=\mathbf{a}$ for some $\mathbf{a}\in Y\setminus\ran(F^{\flat})$ and $H_n=F_n\circ(g,\ldots,g)$ for every $n\in\N$, is standard and preassociative. For instance, the $\varepsilon$-standard squared distance operation $F\colon \R^*\to\R\cup\{\varepsilon\}$ defined as $F_n(\bfx)=\sum_{i=1}^n x_i^2$ for every $n\in\N$ is preassociative.
\end{proposition}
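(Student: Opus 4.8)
The plan is to view the coordinatewise map induced by $g$ as a length-preserving homomorphism of free monoids and to pull preassociativity of $F$ back along it. First I would extend $g$ to strings by setting $g(u_1\cdots u_n)=g(u_1)\cdots g(u_n)$ for $n\in\N$ and $g(\varepsilon)=\varepsilon$. With this convention the definition of $H$ reads $H(\bfu)=F(g(\bfu))$ for every $\bfu\neq\varepsilon$, while $H(\varepsilon)=\mathbf{a}$; moreover the extended map respects concatenation, $g(\bfu\bfv\bfw)=g(\bfu)\,g(\bfv)\,g(\bfw)$, and it carries nonempty strings to nonempty strings (indeed to strings of the same length). These three observations are all that the argument needs.

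Next I would check standardness. For any $\bfu\neq\varepsilon$ the string $g(\bfu)$ is nonempty, so $H(\bfu)=F(g(\bfu))\in\ran(F^{\flat})$; since $\mathbf{a}\in Y\setminus\ran(F^{\flat})$ this gives $H(\bfu)\neq\mathbf{a}=H(\varepsilon)$, which is exactly the statement that $H$ is standard.

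Then I would verify preassociativity, i.e.\ that $H(\bfv)=H(\bfv')$ forces $H(\bfu\bfv\bfw)=H(\bfu\bfv'\bfw)$ for all $\bfu\bfv\bfv'\bfw\in X'^*$, by splitting on whether the altered substring is empty. If one of $\bfv,\bfv'$ is empty, its $H$-value is $\mathbf{a}$, hence so is the other, and standardness of $H$ (just established) forces both to be empty; then $\bfu\bfv\bfw$ and $\bfu\bfv'\bfw$ are literally the same string and there is nothing to prove. If instead $\bfv,\bfv'$ are both nonempty, then so are $\bfu\bfv\bfw$ and $\bfu\bfv'\bfw$, so rewriting each value through $F\circ g$ and using the homomorphism property turns the hypothesis into $F(g(\bfv))=F(g(\bfv'))$ and the goal into $F(g(\bfu)\,g(\bfv)\,g(\bfw))=F(g(\bfu)\,g(\bfv')\,g(\bfw))$; this last equality follows directly from preassociativity of $F$ applied with the strings $g(\bfu),g(\bfv),g(\bfv'),g(\bfw)$.

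The computation is routine once the homomorphism picture is fixed, so the only delicate point will be the empty-substring case, and this is precisely where the hypothesis $\mathbf{a}\notin\ran(F^{\flat})$ is essential: it prevents $\mathbf{a}$ from being a value of $H$ on a nonempty argument, so that $H(\bfv)=H(\bfv')$ with one side empty cannot silently pair strings of different lengths and derail the substitution. Finally, the stated example is obtained by applying this to $X'=X=\R$, $g(x)=x^2$, and the preassociative $\varepsilon$-standard sum operation (for which $\ran(F^{\flat})=\R$, so one may take $\mathbf{a}=\varepsilon$), yielding $H_n(\bfx)=\sum_{i=1}^n x_i^2$.
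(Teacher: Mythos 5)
Your proof is correct and is exactly the routine verification that the paper leaves unproved (it labels this proposition ``straightforward'' and gives no argument): extend $g$ to a length-preserving monoid homomorphism, transport preassociativity of $F$ along it, and use standardness of $H$ (guaranteed by $\mathbf{a}\notin\ran(F^{\flat})$) to dispose of the case where one of the modified substrings is empty. You also correctly identify the one genuinely delicate point -- the empty-substring case -- and your derivation of the squared-distance example from the sum operation with $g(x)=x^2$ and $\mathbf{a}=\varepsilon$ matches the paper's intent.
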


\begin{proposition}[Left composition]\label{prop:leftcomp56}
Let $F\colon X^*\to Y$ be a preassociative standard function and let $g\colon Y\to Y'$ be a function. If $g|_{\ran(F^{\flat})}$ is one-to-one, then the function $H\colon X^*\to Y$ defined as $H_0=\mathbf{a}$ for some $\mathbf{a}\in Y'\setminus\ran(g|_{\ran(F^{\flat})})$ and $H^{\flat}=g\circ F^{\flat}$ is standard and preassociative. For instance, the $\varepsilon$-standard operation $F\colon \R^*\to\R\cup\{\varepsilon\}$ defined as $F_n(\bfx)=\exp(\sum_{i=1}^nx_i)$ for every $n\in\N$ is preassociative.
\end{proposition}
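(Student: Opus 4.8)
The plan is to verify the two asserted properties of $H$ in turn, treating standardness first since it feeds directly into the preassociativity argument. For standardness, I would observe that $H(\varepsilon)=\mathbf{a}$, whereas for every nonempty string $\bfx$ we have $H(\bfx)=g(F(\bfx))\in\ran(g|_{\ran(F^{\flat})})$. Since $\mathbf{a}$ was chosen outside $\ran(g|_{\ran(F^{\flat})})$, this immediately gives $H(\bfx)\neq H(\varepsilon)$ whenever $\bfx\neq\varepsilon$, which is exactly standardness.

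For preassociativity, I would apply Definition~\ref{de:7ads5sa} directly: assume $H(\bfy)=H(\bfy')$ and prove $H(\bfx\bfy\bfz)=H(\bfx\bfy'\bfz)$ for arbitrary $\bfx,\bfz$. The crucial first move is a case distinction driven by the standardness just established: the placement of $\mathbf{a}$ forces $\bfy$ and $\bfy'$ to have the same emptiness status. Indeed, if exactly one of them were empty, one side of $H(\bfy)=H(\bfy')$ would equal $\mathbf{a}$ while the other would lie in $\ran(g|_{\ran(F^{\flat})})$, contradicting the hypothesis; hence this mixed case cannot arise. If both are empty the conclusion is trivial. The only substantive case is when $\bfy$ and $\bfy'$ are both nonempty.

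In that case I would exploit the injectivity hypothesis: since $F(\bfy),F(\bfy')\in\ran(F^{\flat})$ and $g|_{\ran(F^{\flat})}$ is one-to-one, the equality $g(F(\bfy))=g(F(\bfy'))$ yields $F(\bfy)=F(\bfy')$. Preassociativity of $F$ then gives $F(\bfx\bfy\bfz)=F(\bfx\bfy'\bfz)$, and since $\bfx\bfy\bfz$ and $\bfx\bfy'\bfz$ are both nonempty (each contains the nonempty substring $\bfy$, resp.\ $\bfy'$), applying $g$ to both sides produces $H(\bfx\bfy\bfz)=H(\bfx\bfy'\bfz)$, as required.

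I do not expect any serious obstacle here; the argument is essentially a transport of the relevant equalities along $g$ and its partial inverse on $\ran(F^{\flat})$. The only point demanding care is the bookkeeping around the empty string: one must check that the nullary value $\mathbf{a}$ is segregated from the range of $g\circ F^{\flat}$, which is precisely what simultaneously guarantees standardness and forces the collapse of the mixed-emptiness case in the preassociativity verification. This is exactly why the proposition requires $\mathbf{a}\in Y'\setminus\ran(g|_{\ran(F^{\flat})})$ rather than permitting an arbitrary nullary value.
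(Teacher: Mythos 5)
Your proof is correct: the paper states this proposition without proof (it is introduced as ``straightforward''), and your argument is exactly the intended verification --- transport of the equality $H(\bfy)=H(\bfy')$ back through the injective map $g|_{\ran(F^{\flat})}$, application of the preassociativity of $F$, and transport forward again. Your careful handling of the empty-string cases via the placement of $\mathbf{a}$ outside $\ran(g|_{\ran(F^{\flat})})$ is a point the paper leaves tacit, and you resolve it correctly.
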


\begin{remark}\label{rem:s8d76}
\begin{enumerate}
\item[(a)] If $F\colon X^*\to Y$ is a preassociative standard function and $(g_n)_{n\in\N}$ is a sequence of functions from $X'$ to $X$, then the function $H\colon X'^*\to Y$, defined by $H_0=\mathbf{a}$ for some $\mathbf{a}\in Y\setminus\ran(F^{\flat})$ and $H_n=F_n\circ(g_n,\ldots,g_n)$ for every $n\in\N$, need not be preassociative. For instance, consider the $\varepsilon$-standard sum operation $F_n(\bfx)=\sum_{i=1}^nx_i$ over the reals and the sequence $g_n(x)=\exp(nx)$. Then, for $x_1=\log(1)$, $x_2=\log(2)$, $x'_1=\frac{1}{2}\log(3)$, $x'_2=\frac{1}{2}\log(2)$, and $x_3=0$, we have $H(x_1x_2)=H(x'_1x'_2)$ but $H(x_1x_2x_3)\neq H(x'_1x'_2x_3)$.

\item[(b)] Preassociativity is not always preserved by left composition of a preassociative function with a unary map. For instance, consider the $\varepsilon$-standard sum operation $F_n(\bfx)=\sum_{i=1}^nx_i$ over the reals and let $g(x)=\max\{x,0\}$. Then for the $\varepsilon$-standard operation $H^{\flat}=g\circ F^{\flat}$, we have $H(-1,-2)=0=H(-1,1)$ but $H(-1,-2,1)=0\neq 1=H(-1,1,1)$. Thus $H$ is not preassociative.
\end{enumerate}
\end{remark}

Although preassociativity generalizes associativity, it remains a rather strong property, especially when the functions have constant $n$-ary parts. The following result illustrates this observation.

\begin{proposition}
Let $F\colon X^*\to Y$ be a preassociative function.
\begin{enumerate}
\item[(a)] If $F_n$ is constant for some $n\in\N$, then so is $F_{n+1}$.

\item[(b)] If $F_n$ and $F_{n+1}$ are the same constant function $c$, then $F_m=c$ for all $m\geqslant n$.
\end{enumerate}
\end{proposition}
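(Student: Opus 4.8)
The plan is to prove both parts by exhibiting explicit chains of substring replacements sanctioned by preassociativity, exploiting that constancy of $F_n$ forces equality of values on all $n$-strings.

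For part (a), I would fix two strings $\bfw,\bfw'\in X^{n+1}$ and transform one into the other in two moves. Write $\bfw=\bfy z$ and $\bfw'=\bfy'z'$ with $\bfy,\bfy'\in X^n$ and $z,z'\in X$. The first move replaces the initial $n$-block: since $F_n$ is constant we have $F(\bfy)=F(\bfy')$, so preassociativity (with empty left part and right part $z$) gives $F(\bfy z)=F(\bfy'z)$. The second move adjusts the last letter: writing $\bfy'=u\bfv$ with $u\in X$ and $\bfv\in X^{n-1}$, the strings $\bfv z$ and $\bfv z'$ both lie in $X^n$ and hence have equal value, so preassociativity (with left part $u$ and empty right part) gives $F(u\bfv z)=F(u\bfv z')$, that is, $F(\bfy'z)=F(\bfw')$. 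Chaining the two equalities yields $F(\bfw)=F(\bfw')$, so $F_{n+1}$ is constant.

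For part (b), I would argue by induction on $m\geqslant n$, the cases $m=n$ and $m=n+1$ being the hypothesis $F_n=F_{n+1}=c$. For the inductive step, assume $F_m=c$ for some $m\geqslant n+1$ and take any $\bfw\in X^{m+1}$. Split $\bfw=\bfy\bfz$ with $|\bfy|=n+1$ and $|\bfz|=m-n\geqslant 1$, and pick any $\bfy'\in X^n$. Because $F_{n+1}=F_n=c$, we have $F(\bfy)=F(\bfy')$, so preassociativity (with empty left part) gives $F(\bfy\bfz)=F(\bfy'\bfz)$; the right-hand string has length $n+(m-n)=m$, whence $F(\bfy'\bfz)=c$ by the induction hypothesis. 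Thus $F(\bfw)=c$, completing the induction.

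The conceptual heart --- and the only place requiring an idea rather than bookkeeping --- is the length-changing move used in (b): the equality $F_n=F_{n+1}$ means an $n$-block and an $(n+1)$-block carry the same value $c$, so preassociativity lets us swap one for the other inside any string and thereby shift its length by one without altering $F$; this is exactly what reduces length $m+1$ to length $m$. In (a) the mild subtlety is that the single coordinate to be changed must be repackaged inside an $n$-length substring, which is why the argument proceeds in two regrouping steps (prefix, then suffix) rather than one. I would finally note that one may equally invoke Proposition~\ref{prop:AsimpEquivVersionP} to restrict attention to $|\bfx\bfz|=1$, which is precisely the shape of both moves above.
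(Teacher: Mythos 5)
Your proof is correct and takes essentially the same approach as the paper: for (a) the paper uses the same two preassociativity moves (replace the $n$-prefix, then change the last letter inside an $n$-block, phrased as ``$F_{n+1}$ depends only on its first argument and only on its last argument''), and for (b) it performs the same length-shifting swap licensed by $F_n=F_{n+1}=c$, merely growing strings via $F(\bfx z)=F(\bfx yz)$ where you shrink them, with the induction left implicit (``etc.'').
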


\begin{proof}
(a) For every $\bfy,\bfy'\in X^n$ and every $x\in X$, we have $F(\bfy)=F(\bfy')=c_n$ and hence $F(x\bfy)=F(x\bfy')$. This means that $F_{n+1}$ depends only on its first argument. Similarly we show that it depends only on its last argument.

(b) Let $\bfx yz\in X^{n+2}$. Then $c=F(\bfx)=F(\bfx y)$ and hence $c=F(\bfx z)=F(\bfx yz)$. So $F_{n+2}=c$, etc.
\end{proof}

We now focus on those preassociative functions $F\colon X^*\to Y$ which are unarily quasi-range-idempotent, that is, such that $\ran(F_1)=\ran(F^{\flat})$. As we will now show, this special class of functions has very interesting and even surprising properties. First of all, preassociative and unarily quasi-range-idempotent functions are completely determined by their nullary, unary, and binary parts.

\begin{proposition}\label{prop:wer5}
Let $F\colon X^*\to Y$ and $G\colon X^*\to Y$ be preassociative and unarily quasi-range-idempotent functions such that $F_0=G_0$, $F_1=G_1$, and $F_2=G_2$, then $F=G$.
\end{proposition}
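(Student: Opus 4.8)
The plan is to show by induction on $n$ that $F_n=G_n$ for every $n\geqslant 0$. The base cases $n=0,1,2$ are precisely the three hypotheses $F_0=G_0$, $F_1=G_1$, and $F_2=G_2$. For the inductive step I would fix some $n\geqslant 2$, assume $F_n=G_n$, and prove $F_{n+1}=G_{n+1}$.

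The central idea is that unary quasi-range-idempotence allows us to collapse any block of length $n$ to a single letter. Fix an arbitrary string $\bfy z\in X^{n+1}$ with $\bfy\in X^n$ and $z\in X$. Since $F$ is unarily quasi-range-idempotent, we have $F(\bfy)\in\ran(F^{\flat})=\ran(F_1)$, so there is a letter $u\in X$ with $F(u)=F(\bfy)$. Preassociativity of $F$ (applied with empty prefix, suffix $z$, and the block $\bfy$ replaced by $u$) then yields $F(\bfy z)=F(uz)=F_2(uz)$.

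The crucial observation is that the very same letter $u$ works for $G$ as well. Indeed, by the inductive hypothesis together with the base cases, $G(\bfy)=G_n(\bfy)=F_n(\bfy)=F_1(u)=G_1(u)=G(u)$, so $G(u)=G(\bfy)$. Applying preassociativity of $G$ in exactly the same way gives $G(\bfy z)=G(uz)=G_2(uz)$. Since $F_2=G_2$, we conclude that $F(\bfy z)=F_2(uz)=G_2(uz)=G(\bfy z)$; as $\bfy z$ was an arbitrary element of $X^{n+1}$, this proves $F_{n+1}=G_{n+1}$ and completes the induction.

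I expect the only delicate point to be the requirement that a single collapsing letter $u$ serve simultaneously for $F$ and for $G$. This is exactly where the hypotheses $F_1=G_1$ and $F_n=G_n$ enter, and where unary quasi-range-idempotence---rather than the stronger range-idempotence available for associative operations---is precisely the property that makes the reduction of $F_{n+1}$ and $G_{n+1}$ to their common binary part go through.
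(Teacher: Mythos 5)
Your proof is correct and follows essentially the same route as the paper's: an induction in which the length-$n$ prefix is collapsed to a single letter $u$ via unary quasi-range-idempotence, the induction hypothesis together with $F_1=G_1$ guarantees the same $u$ works for $G$, and preassociativity plus $F_2=G_2$ closes the step. Your write-up merely makes explicit two points the paper leaves implicit, namely that the existence of $u$ comes from $\ran(F^{\flat})=\ran(F_1)$ and that transferring $F(u)=F(\bfy)$ to $G(u)=G(\bfy)$ uses both $F_1=G_1$ and the inductive hypothesis.
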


\begin{proof}
Let $F\colon X^*\to Y$ and $G\colon X^*\to Y$ be two function satisfying the stated conditions. We show by induction on $n\in\N$ that $F_n=G_n$. The result clearly holds for $n\leqslant 2$. Suppose that it holds for $n-1\geqslant 1$ and show that it still holds for $n$. Let $\bfx\in X^n$ and choose $z\in X$ such that $F(z)=F(x_1\cdots x_{n-1})$. By induction hypothesis, we have $G(z)=G(x_1\cdots x_{n-1})$. Therefore by preassociativity we have $F_n(\bfx)=F_2(zx_n)=G_2(zx_n)=G_n(\bfx)$.
\end{proof}

\begin{remark}\label{rem:s8d76xd}
Proposition~\ref{prop:wer5} states that any element of the class of preassociative and unarily quasi-range-idempotent functions is completely determined inside this class by its nullary, unary, and binary parts. This property is shared by other classes of preassociative functions. Consider for example the class of standard functions $F\colon X^*\rightarrow Y$ for which there are distinct $c,c'\in Y$ such that $F_1=c$ and $F_n=c'$ for all $n\geqslant 2$. The elements of this class are preassociative functions that are not unarily quasi-range-idempotent. However, any function of this class is completely determined inside the class by its nullary, unary, and binary parts.
\end{remark}

We now give a characterization of the preassociative and unarily quasi-range-idem\-po\-tent standard functions as compositions of associative $\varepsilon$-standard operations with one-to-one unary maps. We first consider a lemma, which provides equivalent conditions for a unarily quasi-range-idempotent function to be preassociative.

\begin{lemma}\label{lemma:UQRIrew67}
Assume AC and let $F\colon X^*\to Y$ be a unarily quasi-range-idempotent function. Consider the following assertions.
\begin{enumerate}
\item[(i)] $F$ is preassociative.

\item[(ii)] For every $g\in Q(F_1)$, the $\varepsilon$-standard operation $H\colon X^*\to X\cup\{\varepsilon\}$ defined by $H^{\flat}=g\circ F^{\flat}$ is associative.

\item[(iii)] There is $g\in Q(F_1)$ such that the $\varepsilon$-standard operation $H\colon X^*\to X\cup\{\varepsilon\}$ defined by $H^{\flat}=g\circ F^{\flat}$ is associative.
\end{enumerate}
Then $(i) \Rightarrow (ii) \Leftrightarrow (iii)$. If $F$ is standard, then $(iii) \Rightarrow (i)$.
\end{lemma}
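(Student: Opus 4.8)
The plan is to run the whole lemma on two earlier results as twin engines. Proposition~\ref{prop:A-PA1} says that an $\varepsilon$-standard operation is associative precisely when it is preassociative and unarily range-idempotent, while Proposition~\ref{prop:QRIqi2431} already guarantees that every operation $H$ with $H^{\flat}=g\circ F^{\flat}$ (for $g\in Q(F_1)$) is automatically $\varepsilon$-standard and unarily range-idempotent, and that $F^{\flat}=F_1\circ H^{\flat}$ with $F_1|_{\ran(H^{\flat})}$ one-to-one. Consequently, in each implication the range-idempotence half is free, and the whole problem reduces to transferring \emph{preassociativity} back and forth between $F$ and the operations $H$. The transfer device will be the left composition Proposition~\ref{prop:leftcomp56}, together with the injectivity properties of quasi-inverses recalled before Proposition~\ref{prop:QRIqi2431}.

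For $(i)\Rightarrow(ii)$ I would fix $g\in Q(F_1)$, set $H^{\flat}=g\circ F^{\flat}$, and check that $H$ is preassociative directly. Since $g|_{\ran(F_1)}=g|_{\ran(F^{\flat})}$ is one-to-one (it has left inverse $F_1$), for nonempty $\bfy,\bfy'$ the equality $H(\bfy)=H(\bfy')$ forces $F(\bfy)=F(\bfy')$; preassociativity of $F$ then yields $F(\bfx\bfy\bfz)=F(\bfx\bfy'\bfz)$, and applying $g$ gives $H(\bfx\bfy\bfz)=H(\bfx\bfy'\bfz)$. The cases in which $\bfy$ or $\bfy'$ is empty are vacuous or trivial because $H$ is $\varepsilon$-standard: its values on nonempty strings lie in $X$ and hence never equal $H(\varepsilon)=\varepsilon$. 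Thus $H$ is preassociative, and Proposition~\ref{prop:A-PA1} makes it associative.

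The equivalence $(ii)\Leftrightarrow(iii)$ has the trivial direction $(ii)\Rightarrow(iii)$ (under AC the set $Q(F_1)$ is nonempty). For $(iii)\Rightarrow(ii)$ I would fix a $g_0\in Q(F_1)$ for which $H_0^{\flat}=g_0\circ F^{\flat}$ is associative, take an arbitrary $g\in Q(F_1)$, and observe the key identity $H^{\flat}=g\circ F^{\flat}=(g\circ F_1)\circ H_0^{\flat}$, obtained from $F^{\flat}=F_1\circ H_0^{\flat}$. Writing $\phi=g\circ F_1$, the map $\phi$ is one-to-one on $\ran(H_0^{\flat})$: indeed $F_1$ is one-to-one on $\ran(g_0)\supseteq\ran(H_0^{\flat})$ and $g$ is one-to-one on $\ran(F_1)$. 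Hence $H$ is the left composition of the preassociative standard operation $H_0$ by a map injective on $\ran(H_0^{\flat})$, and since $\ran(\phi)\subseteq X$ does not contain $\varepsilon$, Proposition~\ref{prop:leftcomp56} (with nullary value $\varepsilon$) shows that $H$ is standard and preassociative. Proposition~\ref{prop:A-PA1} then gives associativity of $H$, and this argument uses no standardness hypothesis on $F$.

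Finally, for $(iii)\Rightarrow(i)$ under the standardness of $F$, I would apply the same left composition proposition in the opposite direction: from $H_0$ preassociative and standard, the map $F_1$ one-to-one on $\ran(H_0^{\flat})$, and $F^{\flat}=F_1\circ H_0^{\flat}$, Proposition~\ref{prop:leftcomp56} produces a preassociative standard function $\tilde F$ that agrees with $F$ on all nonempty strings. The one remaining step -- and the only place where standardness is genuinely needed -- is to lift preassociativity from $\tilde F$ to $F$. The two functions can differ only at $\varepsilon$, so the preassociativity condition for $F$ need only be rechecked when exactly one of $\bfy,\bfy'$ is empty; but then the hypothesis $F(\bfy)=F(\bfy')$ reads $F(\varepsilon)=F(\bfw)$ for some nonempty $\bfw$, which standardness of $F$ rules out, so the condition holds vacuously. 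I expect this last empty-string bookkeeping, and more generally pinning down exactly why standardness is dispensable for $(iii)\Rightarrow(ii)$ but indispensable for $(iii)\Rightarrow(i)$, to be the main conceptual obstacle; the computations themselves are routine once the identity $H^{\flat}=(g\circ F_1)\circ H_0^{\flat}$ and the injectivity of $g\circ F_1$ are in hand.
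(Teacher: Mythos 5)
Your proof is correct and takes essentially the same route as the paper's: the same three engines (Proposition~\ref{prop:QRIqi2431} for range-idempotence, Proposition~\ref{prop:A-PA1} to convert preassociativity into associativity, and Proposition~\ref{prop:leftcomp56} to transfer preassociativity along maps injective on the range), including the paper's key identity $H'^{\flat}=(g'\circ F_1)\circ H^{\flat}$ for $(iii)\Rightarrow(ii)$ and the inversion $F^{\flat}=F_1\circ H^{\flat}$ with $F_1|_{\ran(H^{\flat})}$ one-to-one for $(iii)\Rightarrow(i)$. Your explicit handling of the empty-string cases and of the nullary value, and your isolation of exactly where standardness of $F$ is needed, are details the paper leaves implicit, and they are handled correctly.
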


\begin{proof}
$(i)\Rightarrow (ii)$  By Proposition~\ref{prop:QRIqi2431}, $H$ is unarily range-idempotent. Since $g|_{\ran(F_1)}=g|_{\ran(F^{\flat})}$ is one-to-one, we have that $H$ is preassociative. It follows that $H$ is associative by Proposition~\ref{prop:A-PA1}.

$(ii)\Rightarrow (iii)$ Trivial.

$(iii)\Rightarrow (ii)$ Let $g$ and $H$ be as defined in (iii). Let $g'\in Q(F_1)$ and consider the $\varepsilon$-standard operation $H'\colon X^*\to X\cup\{\varepsilon\}$ defined by $H'^{\flat}=g'\circ F^{\flat}$. By Proposition~\ref{prop:QRIqi2431}, $H'$ is unarily range-idempotent. Since we have $H'^{\flat}=g'\circ F_1\circ g\circ F^{\flat}=g'\circ F_1\circ H^{\flat}$ and the functions $F_1|_{\ran(H^{\flat})}$ and $g'|_{\ran(F_1)}$ are one-to-one, we have that $H'$ is preassociative. It follows that $H'$ is associative by Proposition~\ref{prop:A-PA1}.

$(iii)\Rightarrow (i)$ By Proposition~\ref{prop:A-PA1} we have that $H$ is preassociative. Since $g|_{\ran(F^{\flat})}$ is a one-to-one function from $\ran(F^{\flat})$ onto $\ran(H^{\flat})$, we have $F^{\flat}=(g|_{\ran(F^{\flat})})^{-1}\circ H^{\flat}$. By Proposition~\ref{prop:leftcomp56} it follows that $F$ is preassociative.
\end{proof}

\begin{remark}
Let $F\colon X^*\to Y$ be a preassociative standard function satisfying $F^{\flat}=f\circ H^{\flat}$, where $f\colon X\to Y$ is any function and $H\colon X^*\to X\cup\{\varepsilon\}$ is any unarily range-idempotent $\varepsilon$-standard operation. Then $F^{\flat}=F_1\circ H^{\flat}$ by Proposition~\ref{prop:ACqriTFAE3451}(iv). However, $H$ need not be associative. For instance, if $F^{\flat}$ is a constant function, then $H$ could be any unarily range-idempotent $\varepsilon$-standard operation. However, Lemma~\ref{lemma:UQRIrew67} shows that, assuming AC, there is always an associative $\varepsilon$-standard operation $H$ solving the equation $F^{\flat}=F_1\circ H^{\flat}$; for instance, $H^{\flat}=g\circ F^{\flat}$ for $g\in Q(F_1)$.
\end{remark}

\begin{theorem}\label{thm:FactoriAWRI-BPA237111}
Assume AC and let $F\colon X^*\to Y$ be a function. Consider the following assertions.
\begin{enumerate}
\item[(i)] $F$ is preassociative and unarily quasi-range-idempotent.

\item[(ii)] There exists an associative $\varepsilon$-standard operation $H\colon X^*\to X\cup\{\varepsilon\}$ and a one-to-one function $f\colon\ran(H^{\flat})\to Y$ such that $F^{\flat}=f\circ H^{\flat}$.
\end{enumerate}
Then $(i) \Rightarrow (ii)$. If $F$ is standard, then $(ii) \Rightarrow (i)$. Moreover, if condition (ii) holds, then we have $F^{\flat}=F_1\circ H^{\flat}$, $f=F_1|_{\ran(H^{\flat})}$, $f^{-1}\in Q(F_1)$, and we may choose $H^{\flat}=g\circ F^{\flat}$ for any $g\in Q(F_1)$.
\end{theorem}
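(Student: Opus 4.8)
The plan is to assemble the theorem from the machinery already developed, treating the two implications separately and reading off the ``Moreover'' assertions along the way. The substantive work has in fact been isolated in Lemma~\ref{lemma:UQRIrew67} together with Propositions~\ref{prop:QRIqi2431} and~\ref{prop:ACqriTFAE3451}, so the proof should be a matter of invoking these correctly and keeping track of domains.

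For $(i)\Rightarrow(ii)$, I would invoke AC to fix some $g\in Q(F_1)$ and set $H^{\flat}=g\circ F^{\flat}$. Proposition~\ref{prop:QRIqi2431} then guarantees that $H$ is a unarily range-idempotent $\varepsilon$-standard operation satisfying $F^{\flat}=F_1\circ H^{\flat}$ and that $F_1|_{\ran(H^{\flat})}$ is one-to-one. Since $F$ is preassociative and unarily quasi-range-idempotent, the implication $(i)\Rightarrow(ii)$ of Lemma~\ref{lemma:UQRIrew67} shows that this $H$ is in fact associative. Taking $f=F_1|_{\ran(H^{\flat})}\colon\ran(H^{\flat})\to Y$, which is one-to-one, and noting that $H^{\flat}$ takes values in $\ran(H^{\flat})$ so that $F_1\circ H^{\flat}=f\circ H^{\flat}$, yields the desired factorization $F^{\flat}=f\circ H^{\flat}$.

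For the converse, assume $(ii)$ together with $F$ standard. Because $H$ is associative it is unarily range-idempotent, hence unarily quasi-range-idempotent by Proposition~\ref{prop:22f1f1f1}, so $\ran(H_1)=\ran(H^{\flat})$; applying $f$ gives $\ran(F_1)=\ran(F^{\flat})$, that is, $F$ is unarily quasi-range-idempotent. From $F_1=f\circ H_1$ and $H_1\circ H^{\flat}=H^{\flat}$ I would read off directly that $F_1\circ H^{\flat}=f\circ H^{\flat}=F^{\flat}$, and evaluating $F_1=f\circ H_1$ on $\ran(H^{\flat})$, where $H_1$ acts as the identity, gives $f=F_1|_{\ran(H^{\flat})}$; since this is one-to-one, Proposition~\ref{prop:ACqriTFAE3451}(iv) yields $f^{-1}\in Q(F_1)$. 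Setting $g=f^{-1}$ we have $g\circ F^{\flat}=f^{-1}\circ f\circ H^{\flat}=H^{\flat}$, so assertion $(iii)$ of Lemma~\ref{lemma:UQRIrew67} holds with this $H$; as $F$ is standard, that lemma gives that $F$ is preassociative. This completes $(ii)\Rightarrow(i)$ and, simultaneously, establishes the first three ``Moreover'' equalities.

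Finally, the claim that one may take $H^{\flat}=g\circ F^{\flat}$ for an arbitrary $g\in Q(F_1)$ follows by feeding the now-established conclusion $(i)$ back into the $(i)\Rightarrow(ii)$ construction: for any such $g$, Lemma~\ref{lemma:UQRIrew67}$(i)\Rightarrow(ii)$ makes the corresponding $H$ associative, and Proposition~\ref{prop:QRIqi2431} supplies the one-to-one $f=F_1|_{\ran(H^{\flat})}$ witnessing $(ii)$. I expect the only delicate points to be bookkeeping ones---verifying $f=F_1|_{\ran(H^{\flat})}$ from the unary range-idempotence of $H$ and checking that $f^{-1}\circ f\circ H^{\flat}=H^{\flat}$ holds on the correct domain---rather than any genuinely new difficulty, since the associativity of the intermediate operation $H$ is exactly the content transferred from Lemma~\ref{lemma:UQRIrew67}.
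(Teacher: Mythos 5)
Your proposal is correct and follows essentially the same route as the paper: for $(i)\Rightarrow(ii)$ you fix $g\in Q(F_1)$, set $H^{\flat}=g\circ F^{\flat}$, and invoke Proposition~\ref{prop:QRIqi2431} together with Lemma~\ref{lemma:UQRIrew67}, exactly as the paper does; and your derivation of $f=F_1|_{\ran(H^{\flat})}$ and $f^{-1}\in Q(F_1)$ via Proposition~\ref{prop:ACqriTFAE3451}(iv) matches the paper's computation $F_1\circ H^{\flat}=f\circ H_1\circ H^{\flat}=f\circ H^{\flat}$. The only real divergence is in $(ii)\Rightarrow(i)$: the paper gets unary quasi-range-idempotence from Proposition~\ref{prop:ACqriTFAE3451} (its $(v)\Rightarrow(i)$ step) and preassociativity directly from Proposition~\ref{prop:leftcomp56}, whereas you detour through Lemma~\ref{lemma:UQRIrew67}$(iii)\Rightarrow(i)$ after first establishing $f^{-1}\in Q(F_1)$; this is valid (that implication of the lemma is itself proved via Proposition~\ref{prop:leftcomp56}), just slightly longer.

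One caution on the final clause. To justify that any $g\in Q(F_1)$ works, you propose to ``feed the now-established conclusion $(i)$ back into the $(i)\Rightarrow(ii)$ construction,'' but conclusion $(i)$ is available only when $F$ is standard, while the ``Moreover'' part of the theorem is asserted under condition $(ii)$ alone. The standardness-free repair is already contained in your own argument: you showed that assertion $(iii)$ of Lemma~\ref{lemma:UQRIrew67} holds (take $g=f^{-1}\in Q(F_1)$, for which $f^{-1}\circ F^{\flat}=H^{\flat}$), and you showed $F$ is unarily quasi-range-idempotent without using standardness; hence the implication $(iii)\Rightarrow(ii)$ of that lemma---which requires neither preassociativity nor standardness---yields that $g\circ F^{\flat}$ defines an associative $\varepsilon$-standard operation for \emph{every} $g\in Q(F_1)$, and Proposition~\ref{prop:QRIqi2431} then supplies the one-to-one witness $F_1|_{\ran(H^{\flat})}$. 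With that substitution your proof is complete and in line with the paper's.
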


\begin{proof}
$(i)\Rightarrow (ii)$ Let $g\in Q(F_1)$ and let $H\colon X^*\to X\cup\{\varepsilon\}$ be the $\varepsilon$-standard operation defined by $H^{\flat}=g\circ F^{\flat}$. By Proposition~\ref{prop:QRIqi2431}, $H$ is unarily range-idempotent and we have $F^{\flat}=f\circ H^{\flat}$, where $f=F_1|_{\ran(H^{\flat})}$ is one-to-one. By Lemma~\ref{lemma:UQRIrew67}, $H$ is associative.

$(ii)\Rightarrow (i)$ $F$ is unarily quasi-range-idempotent by Proposition~\ref{prop:ACqriTFAE3451}. It is also preassociative by Proposition~\ref{prop:leftcomp56}.

To prove the second part of the result, we observe that $F_1\circ H^{\flat}=f\circ H_1\circ H^{\flat}=f\circ H^{\flat}$ and hence $F|_{\ran(H^{\flat})}=f|_{\ran(H^{\flat})}=f$. We then conclude by using Proposition~\ref{prop:ACqriTFAE3451}(iv).
\end{proof}

\begin{remark}\label{rem:fsd68s}
\begin{enumerate}
\item[(a)] If condition (ii) of Theorem~\ref{thm:FactoriAWRI-BPA237111} holds, then by Eq.~(\ref{eq:saf76sf5xx}) we see that $F$ can be computed recursively by
    $$
    F_n(x_1\cdots x_n) ~=~ F_2((g\circ F_{n-1})(x_1\cdots x_{n-1})x_n),\qquad n\geqslant 3,
    $$
    where $g\in Q(F_1)$. A similar observation was already made in a more particular setting for the so-called quasi-associative functions; see \cite{Yag87}.

\item[(b)] A standard function $F\colon X^*\to Y$ satisfying $F^{\flat}=F_1\circ H^{\flat}$, where $H$ is an associative $\varepsilon$-standard operation, need not be preassociative. The example given in Remark~\ref{rem:s8d76}(b) illustrates this observation. To give a second example, take $X=\R$, $F_1(x)=|x|$ (the absolute value of $x$) and $H_n(\bfx)=\sum_{i=1}^nx_i$ for every $n\in\N$. Then $F(1)=F(-1)$ but $F(11)=2\neq 0=F(1(-1))$. Thus $F$ is not preassociative.
\end{enumerate}
\end{remark}

We now provide necessary and sufficient conditions on the unary and binary parts for a standard function $F\colon X^*\to Y$ to be preassociative and unarily quasi-range-idempotent. The result is stated in Theorem~\ref{thm:sdf87fs2} below and follows from the next proposition.

\begin{proposition}\label{thm:sad6f7asdf}
Assume AC. A standard function $F\colon X^*\to Y$ is preassociative and unarily quasi-range-idempotent if and only if $\ran(F_2)\subseteq\ran(F_1)$ and there exists $g\in Q(F_1)$ such that
\begin{enumerate}
\item[(i)] $H_2=H_2\circ (H_1,\id)=H_2\circ (\id,H_1)$,

\item[(ii)] $H_2$ is associative, and

\item[(iii)] the following holds
$$
F_n(x_1\cdots x_n) ~=~ F_2((g\circ F_{n-1})(x_1\cdots x_{n-1})x_n),\qquad n\geqslant 3,
$$
or equivalently,
$$
F_n(x_1\cdots x_n) ~=~ F_2(H_2(\cdots H_2(H_2(x_1x_2)x_3)\cdots) x_n),\qquad n\geqslant 3,
$$
\end{enumerate}
where $H_1=g\circ F_1$ and $H_2=g\circ F_2$.
\end{proposition}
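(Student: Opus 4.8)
The plan is to route everything through the associative $\varepsilon$-standard operation $H$ attached to a quasi-inverse $g\in Q(F_1)$ by $H^{\flat}=g\circ F^{\flat}$, so that $H_1=g\circ F_1$ and $H_2=g\circ F_2$ as in the statement, and to trade the conditions on $F$ for the characterization of associative operations by their unary and binary parts (Theorem~\ref{thm:sdf87fs}, which rests on Proposition~\ref{thm:Acz12new}). Both implications then reduce to passing between $F$ and $H$ via the relations $F^{\flat}=F_1\circ H^{\flat}$ and $H^{\flat}=g\circ F^{\flat}$.

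For necessity, assume $F$ is preassociative and unarily quasi-range-idempotent. Then $\ran(F_2)\subseteq\ran(F^{\flat})=\ran(F_1)$ at once. By Theorem~\ref{thm:FactoriAWRI-BPA237111} (equivalently Lemma~\ref{lemma:UQRIrew67}), for the chosen $g$ the operation $H$ is associative and $F^{\flat}=F_1\circ H^{\flat}$, so in particular $F_2=F_1\circ H_2$. Conditions (i) and (ii) of the statement are then exactly conditions (ii) and (iii) of Proposition~\ref{thm:Acz12new} applied to the associative $H$. For (iii), I would combine $F^{\flat}=F_1\circ H^{\flat}$ with the recursion~(\ref{eq:saf76sf5xx}) for $H$, together with $F_2=F_1\circ H_2$ and $H_{n-1}=g\circ F_{n-1}$, to obtain
$$F_n(x_1\cdots x_n)=F_1(H_2(H_{n-1}(x_1\cdots x_{n-1})x_n))=F_2((g\circ F_{n-1})(x_1\cdots x_{n-1})x_n);$$
unfolding $H_{n-1}$ by~(\ref{eq:saf76sf5}) yields the equivalent second form.

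For sufficiency, the work is to promote $H_1,H_2$ to a genuine associative operation and then recover $F$ from it. First I would check that the two conditions of Proposition~\ref{thm:Acz12new}(i) absent from the statement hold automatically: $H_1\circ H_1=H_1$ follows from the quasi-inverse identity $F_1\circ g\circ F_1=F_1$, and $H_1\circ H_2=H_2$ follows from $\ran(F_2)\subseteq\ran(F_1)$, which gives $F_1\circ g\circ F_2=F_2$. Together with hypotheses (i) and (ii), Theorem~\ref{thm:sdf87fs} supplies a unique associative $\varepsilon$-standard operation $H$ with these unary and binary parts, determined by $H_n=H_2\circ(H_{n-1},\id)$ for $n\geqslant 3$. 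A short induction on $n$ using hypothesis (iii) then shows $H_n=g\circ F_n$ for all $n$, i.e.\ $H^{\flat}=g\circ F^{\flat}$. Since (iii) forces $\ran(F_n)\subseteq\ran(F_1)$ for every $n\geqslant 2$, one gets $\ran(F^{\flat})=\ran(F_1)$, so $F$ is unarily quasi-range-idempotent, and the quasi-inverse identity gives $F^{\flat}=F_1\circ g\circ F^{\flat}=F_1\circ H^{\flat}$. Finally, as $F$ is standard and $H$ is associative with $H^{\flat}=g\circ F^{\flat}$, Lemma~\ref{lemma:UQRIrew67}$(iii)\Rightarrow(i)$ yields that $F$ is preassociative.

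The main obstacle lies in the sufficiency induction establishing $H^{\flat}=g\circ F^{\flat}$: the operation produced by Theorem~\ref{thm:sdf87fs} has its higher arities defined by the $H_2$-recursion rather than directly as $g\circ F_n$, and it is precisely hypothesis (iii) that reconciles the two. Indeed, assuming $H_{n-1}=g\circ F_{n-1}$, one expands $H_n(x_1\cdots x_n)=(g\circ F_2)\bigl((g\circ F_{n-1})(x_1\cdots x_{n-1})\,x_n\bigr)$ and applies (iii) inside the outer $g$ to recover $g\circ F_n$. A secondary subtlety is recognizing that $\ran(F_2)\subseteq\ran(F_1)$ is exactly the extra hypothesis needed to secure $H_1\circ H_2=H_2$ for free, so that imposing only (i) and (ii)---rather than the full list of Proposition~\ref{thm:Acz12new}---suffices.
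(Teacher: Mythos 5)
Your proposal is correct and follows essentially the same route as the paper: necessity via Proposition~\ref{prop:QRIqi2431} and Lemma~\ref{lemma:UQRIrew67} (giving the associative $H$ with $F^{\flat}=F_1\circ H^{\flat}$ and then translating the conditions of Proposition~\ref{thm:Acz12new}), and sufficiency by checking the missing identities $H_1\circ H_1=H_1$ and $H_1\circ H_2=H_2$ for $H_1=g\circ F_1$, $H_2=g\circ F_2$, deducing $\ran(F^{\flat})=\ran(F_1)$ from (iii), and concluding preassociativity through Lemma~\ref{lemma:UQRIrew67}$(iii)\Rightarrow(i)$. Your detour through Theorem~\ref{thm:sdf87fs} followed by the induction $H_n=g\circ F_n$ is merely a repackaging of the paper's direct definition $H^{\flat}=g\circ F^{\flat}$ (for which condition (iv) of Proposition~\ref{thm:Acz12new} is hypothesis (iii) composed with $g$), so the two arguments are the same in substance.
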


\begin{proof}
(Necessity) Let $F\colon X^*\to Y$ be standard, preassociative, and unarily quasi-range-idem{\-}potent. Then clearly $\ran(F_2)\subseteq\ran(F^{\flat})=\ran(F_1)$. Let $g\in Q(F_1)$ and let $H\colon X^*\to X\cup\{\varepsilon\}$ be the $\varepsilon$-standard operation defined by $H^{\flat}=g\circ F^{\flat}$. By Lemma~\ref{lemma:UQRIrew67}, $H$ is associative and hence conditions (i)--(iii) hold by Proposition~\ref{thm:Acz12new}.

(Sufficiency) Let $F\colon X^*\to Y$ be a standard function satisfying $\ran(F_2)\subseteq\ran(F_1)$ and conditions (i)--(iii) for some $g\in Q(F_1)$. By conditions (ii) and (iii) we must have
$$
F_n ~=~ F_2\circ (g\circ F_{n-1},\id) ~=~ F_2\circ (\id,g\circ F_{n-1}) \qquad\mbox{for every $n\geqslant 3$.}
$$
Then $\ran(F_n)\subseteq\ran(F_2)\subseteq\ran(F_1)$ for every $n\geqslant 3$ and hence $F$ is unarily quasi-range-idempotent.

Let us show that $F$ is preassociative. By Lemma~\ref{lemma:UQRIrew67} it suffices to show that the $\varepsilon$-standard operation $H\colon X^*\to X\cup\{\varepsilon\}$ defined by $H^{\flat}=g\circ F^{\flat}$ is associative. By Proposition~\ref{thm:Acz12new}, it suffices to show that $H_1=H_1\circ H_1$ and $H_2=H_1\circ H_2$, or equivalently, $g\circ F_1=g\circ F_1\circ g\circ F_1$ and $g\circ F_2=g\circ F_1\circ g\circ F_2$, respectively. These identities clearly hold since $g\in Q(F_1)$ implies $g\circ F_1\circ g=g$.
\end{proof}

\begin{theorem}\label{thm:sdf87fs2}
Assume AC and let $F_1\colon X\to Y$ and $F_2\colon X^2\to Y$ be two functions. Then there exists a preassociative and unarily quasi-range-idempotent standard function $G\colon X^*\to Y$ such that $G_1=F_1$ and $G_2=F_2$ if and only if $\ran(F_2)\subseteq\ran(F_1)$ and there exists $g\in Q(F_1)$ such that conditions (i) and (ii) of Proposition~\ref{thm:sad6f7asdf} hold, where $H_1=g\circ F_1$ and $H_2=g\circ F_2$. Such a function $G$ is then uniquely determined by $G_0$ and $G_n=G_2\circ (g\circ G_{n-1},\id)$ for $n\geqslant 3$.
\end{theorem}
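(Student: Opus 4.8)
The plan is to deduce the theorem directly from Proposition~\ref{thm:sad6f7asdf}, which already characterizes when a \emph{full} function $G\colon X^*\to Y$ is standard, preassociative, and unarily quasi-range-idempotent. The key observation is that condition~(iii) of that proposition is precisely the recursion $G_n=G_2\circ(g\circ G_{n-1},\id)$; hence, by \emph{imposing} this recursion as a definition, the characterization of full functions turns into a characterization of the data $(F_1,F_2)$. So the whole proof should be little more than a transcription of Proposition~\ref{thm:sad6f7asdf}, together with some bookkeeping for the nullary value.

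For the necessity, I would assume that a standard $G\colon X^*\to Y$ as in the statement exists, with $G_1=F_1$ and $G_2=F_2$. Applying Proposition~\ref{thm:sad6f7asdf} to $G$ immediately yields $\ran(F_2)=\ran(G_2)\subseteq\ran(G_1)=\ran(F_1)$ together with some $g\in Q(F_1)=Q(G_1)$ for which conditions (i)--(iii) hold; in particular (i) and (ii) hold with $H_1=g\circ F_1$ and $H_2=g\circ F_2$. Condition~(iii) of the same proposition then shows that, for every $n\geqslant 3$, the value $G_n$ is determined by $G_0$, $F_1$, $F_2$, and $g$ through the stated recursion, which gives the uniqueness claim (alternatively, uniqueness follows at once from Proposition~\ref{prop:wer5}, since $G$ is determined inside this class by $G_0$, $G_1$, $G_2$).

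For the sufficiency, suppose $\ran(F_2)\subseteq\ran(F_1)$ and fix $g\in Q(F_1)$ satisfying (i) and (ii). I would then \emph{define} $G$ by setting $G_1=F_1$, $G_2=F_2$, and $G_n=G_2\circ(g\circ G_{n-1},\id)$ for $n\geqslant 3$, and by choosing a nullary value $G_0\in Y\setminus\ran(F_1)$. By construction $G$ satisfies condition~(iii) of Proposition~\ref{thm:sad6f7asdf}, while (i) and (ii) hold by hypothesis, so it remains only to verify that $G$ is standard in order to invoke the sufficiency part of that proposition. To this end I would compute $\ran(G^{\flat})$ directly from the recursion: since $\ran(G_n)\subseteq\ran(G_2)=\ran(F_2)\subseteq\ran(F_1)$ for every $n\geqslant 2$ and $\ran(G_1)=\ran(F_1)$, we obtain $\ran(G^{\flat})=\ran(F_1)$; the choice $G_0\notin\ran(F_1)$ then makes $G$ standard. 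Proposition~\ref{thm:sad6f7asdf} now applies and shows that $G$ is preassociative and unarily quasi-range-idempotent, as required.

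The one delicate point, and the step I expect to be the only genuine obstacle, is the standardness requirement: the construction needs a nullary value lying outside $\ran(G^{\flat})=\ran(F_1)$, so one must first secure $\ran(F_1)\neq Y$ before choosing $G_0$ (exactly as in the right- and left-composition results above, where the nullary value is taken in $Y\setminus\ran(F^{\flat})$). Computing $\ran(G^{\flat})$ from the recursion is precisely what lets us guarantee standardness, and standardness is in turn exactly the hypothesis under which Proposition~\ref{thm:sad6f7asdf} can be invoked; apart from this bookkeeping, every assertion of the theorem reduces to that proposition.
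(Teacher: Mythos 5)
Your derivation follows exactly the route the paper intends: the paper offers no separate proof of Theorem~\ref{thm:sdf87fs2} beyond the remark that it ``follows from'' Proposition~\ref{thm:sad6f7asdf}, and your necessity argument (apply the proposition to $G$), your sufficiency argument (define $G_n$ for $n\geqslant 3$ by the recursion so that condition (iii) holds by construction, then invoke the proposition), and your uniqueness argument (via the recursion, or equivalently via Proposition~\ref{prop:wer5}) are precisely that derivation, carried out with more care than the paper itself.

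However, the ``delicate point'' you flag is not mere bookkeeping: it is a genuine gap, and it cannot be closed from the stated hypotheses, because the sufficiency direction of the theorem as literally stated fails when $\ran(F_1)=Y$. Concretely, take $X=Y$, $F_1=\id$, and $F_2(x_1x_2)=x_1$; then $\ran(F_2)\subseteq\ran(F_1)$, $g=\id\in Q(F_1)$, and conditions (i) and (ii) hold (here $H_1=\id$ and $H_2=F_2$ is associative), yet no standard $G$ with $G_1=F_1$ can exist, since standardness forces $G_0=G(\varepsilon)\notin\ran(G^{\flat})\supseteq\ran(G_1)=Y$. So your instruction to ``first secure $\ran(F_1)\neq Y$'' has no possible implementation from the given data; unlike the right- and left-composition propositions, where standardness of the given $F$ guarantees $Y\setminus\ran(F^{\flat})\neq\varnothing$, nothing here plays that role. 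The repair is to add $\ran(F_1)\neq Y$ to the right-hand side of the equivalence---it comes for free in the necessity direction by the same computation $G_0\notin\ran(G^{\flat})\supseteq\ran(F_1)$---after which your construction with any $G_0\in Y\setminus\ran(F_1)$, using your correct computation $\ran(G^{\flat})=\ran(F_1)$, completes the proof. The defect is invisible in the associative analogue (Theorem~\ref{thm:sdf87fs}) because there the codomain $X\cup\{\varepsilon\}$ always contains the spare element $\varepsilon$ for the nullary value; with arbitrary $Y$, the paper's statement silently presupposes such an element, and your proposal inherits, but correctly isolates, that presupposition.
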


We close this section by introducing and discussing a stronger version of preassociativity. Recall that preassociativity means that the equality of the function values of two strings still holds when adding identical arguments on the left or on the right of these strings. Now, suppose that the equality still holds when adding identical arguments at any place. We then have the following definition.

\begin{definition}\label{def:sdfssf}
We say that a function $F\colon X^*\to Y$ is \emph{strongly preassociative} if for every $\bfx\bfx'\bfy\bfz\bfz'\in X^*$ we have
\begin{equation}\label{eq:SrPA}
F(\bfx\bfz) ~=~ F(\bfx'\bfz')\quad\Rightarrow\quad F(\bfx\bfy\bfz) ~=~ F(\bfx'\bfy\bfz').
\end{equation}
\end{definition}

Clearly, Definition~\ref{def:sdfssf} remains unchanged if we assume that $|\bfy|=1$.

As we could expect, strongly preassociative functions are exactly those preassociative functions which are symmetric, i.e., invariant under any permutation of the arguments.

\begin{proposition}\label{prop:sdf57dzzs}
A function $F\colon X^*\to Y$ is strongly preassociative if and only if it is preassociative and $F_n$ is symmetric for every $n\in\N$.
\end{proposition}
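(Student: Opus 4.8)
The plan is to prove the two directions separately, in each case reducing to the simplest nontrivial insertion (a single argument inserted in one place) by means of the already-available simplifications. For the \emph{sufficiency} direction, I would assume that $F$ is preassociative and that every $F_n$ is symmetric, and show that $F$ is strongly preassociative. Since Definition~\ref{def:sdfssf} may be taken with $|\bfy|=1$, it suffices to fix a single $y\in X$ and strings $\bfx\bfz,\bfx'\bfz'$ with $F(\bfx\bfz)=F(\bfx'\bfz')$, and to prove $F(\bfx y\bfz)=F(\bfx' y\bfz')$. The key idea is to use symmetry to move the inserted letter $y$ to the very end (or the very front) of each string: by symmetry of the appropriate $n$-ary parts, $F(\bfx y\bfz)=F(\bfx\bfz y)$ and $F(\bfx' y\bfz')=F(\bfx'\bfz' y)$. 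Now the two strings $\bfx\bfz$ and $\bfx'\bfz'$ have equal $F$-values by hypothesis, so preassociativity (with the inserted $y$ appended on the right) gives $F(\bfx\bfz y)=F(\bfx'\bfz' y)$, and chaining the equalities yields the desired conclusion.

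For the \emph{necessity} direction, I would assume $F$ is strongly preassociative and deduce both preassociativity and symmetry of each $F_n$. Preassociativity is the special case of \eqref{eq:SrPA} in which the common inserted block $\bfy$ sits at one end, i.e.\ one takes $\bfz=\bfz'=\varepsilon$ (so that the hypothesis $F(\bfx)=F(\bfx')$ forces $F(\bfx\bfy)=F(\bfx'\bfy)$) and symmetrically for insertion on the left; this is immediate. The substantive part is symmetry: I would show $F_n$ is invariant under transposition of two adjacent arguments, since adjacent transpositions generate the full symmetric group. Given $\bfx uv\bfz\in X^n$, I want $F(\bfx uv\bfz)=F(\bfx vu\bfz)$. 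Applying \eqref{eq:SrPA} with the trivial equality $F(\bfx)=F(\bfx)$ and inserting the block $uv$ versus $vu$ is \emph{not} directly legitimate, since strong preassociativity inserts the \emph{same} block $\bfy$ on both sides; so the obstacle is that the definition as stated inserts a common block rather than permuting.

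The main obstacle, then, is extracting symmetry from a property whose literal statement only permits inserting identical material. I expect to resolve this by using the reflexive instance of \eqref{eq:SrPA} combined with a single insertion. Concretely, take any $\bfx,\bfz\in X^*$ and $u,v\in X$; from the trivial equality $F(\bfx u\bfz)=F(\bfx u\bfz)$, inserting the single letter $v$ immediately after the displayed copy of $u$ in one string and before it in the other (both legitimate single-letter insertions governed by \eqref{eq:SrPA} with $|\bfy|=1$) produces the two strings $\bfx uv\bfz$ and $\bfx vu\bfz$ with equal $F$-value. Making this bookkeeping precise—choosing the splitting of the base strings so that the same base equality underlies both insertions—is the crux; once adjacent transpositions are handled, symmetry of $F_n$ for every $n$ follows since the empty and unary cases are vacuous and adjacent transpositions generate $S_n$. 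Combining this with the preassociativity already obtained completes the necessity direction.
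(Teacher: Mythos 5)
Your proof is correct, and it rests on the same key trick as the paper---applying \eqref{eq:SrPA} to a reflexive equality read through two \emph{different} decompositions---but it instantiates that trick differently. The paper takes $\bfz=\bfx'=\varepsilon$ and $\bfz'=\bfx$ in \eqref{eq:SrPA}, so the trivial hypothesis $F(\bfx)=F(\bfx)$ yields the block-commutation identity $F(\bfx\bfy)=F(\bfy\bfx)$; it then re-embeds this in context by three further applications of strong preassociativity (inserting $\bfv$ in the middle, then $\bfu$ on the left and $\bfw$ on the right) to obtain $F(\bfu\bfx\bfv\bfy\bfw)=F(\bfu\bfy\bfv\bfx\bfw)$, which gives symmetry directly, with no generation argument. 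You instead read the single string $\bfx u\bfz$ under the two splittings $(\bfx u,\bfz)$ and $(\bfx,u\bfz)$, so that one application of \eqref{eq:SrPA} with inserted letter $v$ gives the adjacent transposition $F(\bfx uv\bfz)=F(\bfx vu\bfz)$ in situ, and you finish by noting that adjacent transpositions generate $S_n$. The step you flag as ``the crux'' is in fact already complete exactly as you describe it: the base equality has the same string on both sides, and the two decompositions are legitimate because $\bfx,\bfx',\bfz,\bfz'$ in \eqref{eq:SrPA} are unrestricted, so no bookkeeping remains and there is no gap. Your route is marginally more economical (one application of \eqref{eq:SrPA} plus a standard group-theoretic fact, versus the paper's four applications); the paper's route avoids invoking generation of the symmetric group and produces arbitrary block swaps at once. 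As for sufficiency, the paper dismisses it with ``we only need to prove the necessity,'' and your argument (move the inserted letter to the right end by symmetry, then apply preassociativity to the equal-valued strings $\bfx\bfz$ and $\bfx'\bfz'$) is precisely the implicit argument; note only that symmetry lets you move an entire block $\bfy$ to the end in one step, so your reduction to $|\bfy|=1$, while harmless, is not needed there.
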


\begin{proof}
We only need to prove the necessity. Taking $\bfz=\bfz'=\varepsilon$ or $\bfx=\bfx'=\varepsilon$ in Eq.~(\ref{eq:SrPA}) shows that $F$ is preassociative. Taking $\bfz=\bfx'=\varepsilon$ and $\bfz'=\bfx$, we obtain $F(\bfx\bfy)=F(\bfy\bfx)$ for every $\bfx\bfy\in X^*$. Then, by strong preassociativity we also have $F(\bfu\bfx\bfv\bfy\bfw)=F(\bfu\bfy\bfv\bfx\bfw)$ for every $\bfu\bfx\bfv\bfy\bfw\in X^*$, which shows that $F$ is symmetric.
\end{proof}

%---------------------------------------------------------------------------------------------- Section
\section{Concluding remarks and open problems}

We have proposed a relaxation of associativity for variadic functions, namely preassociativity, and we have investigated this new property. In particular, we have presented characterizations of those preassociative standard functions which are unarily quasi-range-idempotent.

This area of investigation is intriguing and appears not to have been previously studied. We have just skimmed the surface, and there are a lot of questions to be answered. Some are listed below.

\begin{enumerate}
\item Find necessary and sufficient conditions on a class of preassociative functions for each element of this class to be completely determined inside the class by its nullary, unary, and binary parts (cf.\ Remark~\ref{rem:s8d76xd}).

\item Find a generalization of Theorem~\ref{thm:FactoriAWRI-BPA237111} without the unary quasi-range-idem{\-}potence property.

\item Find necessary and sufficient conditions on $F_1$ for a standard function $F$ of the form $F^{\flat}=F_1\circ H^{\flat}$, where $H$ is an associative $\varepsilon$-standard operation, to be preassociative (cf.\ Remark~\ref{rem:fsd68s}(b)).
\end{enumerate}

%---------------------------------------------------------------------------------------------- Bibliography

\end{document}